\DeclareRobustCommand{\cev}[1]{%
  \mathpalette\do@cev{#1}%
}
\newcommand{\do@cev}[2]{%
  \fix@cev{#1}{+}%
  \reflectbox{$\m@th#1\vec{\reflectbox{$\fix@cev{#1}{-}\m@th#1#2\fix@cev{#1}{+}$}}$}%
  \fix@cev{#1}{-}%
}
\newcommand{\fix@cev}[2]{%
  \ifx#1\displaystyle
    \mkern#23mu
  \else
    \ifx#1\textstyle
      \mkern#23mu
    \else
      \ifx#1\scriptstyle
        \mkern#22mu
      \else
        \mkern#22mu
      \fi
    \fi
  \fi
}
\tikzset{
	commutative diagrams/.cd, 
	arrow style=tikz, 
	diagrams={>=stealth}
}
\tikzset{
	arrow/.pic={\path[tips,every arrow/.try,->,>=#1] (0,0) -- +(0,4pt);},
	pics/arrow/.default={triangle 90}
}
\tikzset{->-/.style={decoration={
			markings,
			mark=at position .6 with {\arrow{latex}}},postaction={decorate}}
}
\tikzset{
	c/.style={every coordinate/.try}
}
\theoremstyle{theorem}
\newenvironment{customthm}[1]
{\innercustomthm}
{\endinnercustomthm}
\theoremstyle{theorem}
\theoremstyle{theorem}
\def\@tocline#1#2#3#4#5#6#7{\relax
	\ifnum #1>\c@tocdepth 
	\else
	\par \addpenalty\@secpenalty\addvspace{#2}%
	\begingroup \hyphenpenalty\@M
	\@ifempty{#4}{%
		\@tempdima\csname r@tocindent\number#1\endcsname\relax
	}{%
		\@tempdima#4\relax
	}%
	\parindent\z@ \leftskip#3\relax \advance\leftskip\@tempdima\relax
	\rightskip\@pnumwidth plus4em \parfillskip-\@pnumwidth
	#5\leavevmode\hskip-\@tempdima
	\ifcase #1
	\or\or \hskip 1em \or \hskip 2em \else \hskip 3em \fi%
	#6\nobreak\relax
	\dotfill\hbox to\@pnumwidth{\@tocpagenum{#7}}\par
	\nobreak
	\endgroup
	\fi}
\newcounter{marginnote}
\DeclareMathAlphabet{\mathpzc}{OT1}{pzc}{m}{it}
\theoremstyle{theorem}
\newtheorem{theorem}{Theorem}[section]
\newtheorem{corollary}[theorem]{Corollary}
\newtheorem{lemma}[theorem]{Lemma}
\newtheorem{proposition}[theorem]{Proposition}
\newtheorem*{problem*}{Problem}
\theoremstyle{definition}
\newtheorem{remark}[theorem]{Remark}
\newtheorem*{runningexample*}{Running example}
\newtheorem*{aside*}{Aside}
\newtheorem{proposition-definition}[theorem]{Proposition-Definition}
\newcommand{\xdashleftrightarrow}[2][]{\ext@arrow 3359\leftrightarrowfill@@{#1}{#2}}
\newcommand{\ol}[1]{\overline{#1}}
\newcommand{\bcd}{\begin{center}\begin{tikzcd}}
		\newcommand{\ecd}{\end{tikzcd}\end{center}}
\newcommand{\Aaff}{\mathbb{A}}
\newcommand{\OO}{\mathcal{O}}
\newcommand{\Z}{\mathbb{Z}}
\newcommand{\Q}{\mathbb{Q}}
\newcommand{\orb}{\mathrm{orb}}
\newcommand{\Mcal}{\mathcal{M}}
\newcommand{\Ical}{\mathcal{I}}
\let\save@mathaccent\mathaccent
\newcommand*\if@single[3]{%
  \setbox0\hbox{${\mathaccent"0362{#1}}^H$}%
  \setbox2\hbox{${\mathaccent"0362{\kern0pt#1}}^H$}%
  \ifdim\ht0=\ht2 #3\else #2\fi
  }
\newcommand*\rel@kern[1]{\kern#1\dimexpr\macc@kerna}
\newcommand*\widebar[1]{\@ifnextchar^{{\wide@bar{#1}{0}}}{\wide@bar{#1}{1}}}
\newcommand*\wide@bar[2]{\if@single{#1}{\wide@bar@{#1}{#2}{1}}{\wide@bar@{#1}{#2}{2}}}
\newcommand*\wide@bar@[3]{%
  \begingroup
  \def\mathaccent##1##2{%
    \let\mathaccent\save@mathaccent
    \if#32 \let\macc@nucleus\first@char \fi
    \setbox\z@\hbox{$\macc@style{\macc@nucleus}_{}$}%
    \setbox\tw@\hbox{$\macc@style{\macc@nucleus}{}_{}$}%
    \dimen@\wd\tw@
    \advance\dimen@-\wd\z@
    \divide\dimen@ 3
    \@tempdima\wd\tw@
    \advance\@tempdima-\scriptspace
    \divide\@tempdima 10
    \advance\dimen@-\@tempdima
    \ifdim\dimen@>\z@ \dimen@0pt\fi
    \rel@kern{0.6}\kern-\dimen@
    \if#31
      \overline{\rel@kern{-0.6}\kern\dimen@\macc@nucleus\rel@kern{0.4}\kern\dimen@}%
      \advance\dimen@0.4\dimexpr\macc@kerna
      \let\final@kern#2%
      \ifdim\dimen@<\z@ \let\final@kern1\fi
      \if\final@kern1 \kern-\dimen@\fi
    \else
      \overline{\rel@kern{-0.6}\kern\dimen@#1}%
    \fi
  }%
  \macc@depth\@ne
  \let\math@bgroup\@empty \let\math@egroup\macc@set@skewchar
  \mathsurround\z@ \frozen@everymath{\mathgroup\macc@group\relax}%
  \macc@set@skewchar\relax
  \let\mathaccentV\macc@nested@a
  \if#31
    \macc@nested@a\relax111{#1}%
  \else
    \def\gobble@till@marker##1\endmarker{}%
    \futurelet\first@char\gobble@till@marker#1\endmarker
    \ifcat\noexpand\first@char A\else
      \def\first@char{}%
    \fi
    \macc@nested@a\relax111{\first@char}%
  \fi
  \endgroup
}
\newcommand{\GL}{\operatorname{GL}}
\newcommand{\DR}{\operatorname{DR}}
\newcommand{\Jac}{\operatorname{Jac}}
\newcommand{\aj}{\operatorname{aj}}
\crefname{equation}{eq.}{eqs.}
\crefname{eqnarray}{eq.}{eqs.}
\crefname{conjecture}{conjecture}{conjectures}
\crefname{lemma}{lemma}{lemmas}
\crefname{theorem}{theorem}{theorems}
\crefname{claim}{claim}{claims}
\crefname{remark}{remark}{remarks}
\crefname{proposition}{proposition}{propositions}
\crefname{section}{section}{sections}
\crefname{appendix}{appendix}{appendices}
\crefname{corollary}{corollary}{corollaries}
\crefname{figure}{figure}{figures}
\crefname{table}{table}{tables}
\crefname{example}{example}{examples}
\crefname{assumption}{assumption}{assumptions}
\crefname{definition}{definition}{definitions}
\crefname{innercustomthm}{theorem}{theorems}
\crefname{innercustomconj}{conjecture}{conjectures}
\setlist[enumerate,1]{label=(\roman*),itemsep=0.9ex}
\setlist[itemize]{itemsep=0.9ex}
\begin{document}

\title[Euler characteristics of higher rank DR loci in genus one]{Euler characteristics of higher rank \\ double ramification loci in genus one}
\author{Luca Battistella and Navid Nabijou}

\begin{abstract} 
Double ramification loci parametrise marked curves where a weighted sum of the markings is linearly trivial; higher-rank loci are obtained by imposing several such conditions simultaneously. We obtain closed formulae for the orbifold Euler characteristics of double ramification loci, and their higher-rank generalisations, in genus one. The rank-one formula is a polynomial, while the higher-rank formula involves greatest common divisors of matrix minors. The proof is based on a recurrence relation, which allows for induction on the rank and number of markings.
\end{abstract}

\maketitle

\vspace{-1cm}

\section*{Introduction}

\noindent Fix $g \geqslant 0$ and a vector $a=(a_1,\ldots,a_n) \in \Z^n$ with $\Sigma_{i=1}^n a_i=0$. The associated (open) double ramification locus is given on the level of closed points by:
\[ \DR_{g,n}(a) = \left\{ (C,p_1,\ldots,p_n) : \OO_C(\Sigma_{i=1}^n a_i p_i) \cong \OO_C \right\} \subseteq \Mcal_{g,n}.\]
Higher-rank double ramification loci are produced by intersecting several of the above loci. We determine the orbifold Euler characteristics of double ramification loci and their higher-rank generalisations in the first nontrivial genus, namely $g=1$.

Recently, strata of differentials have attracted considerable attention due to their position at the interface of dynamics and algebraic geometry \cite{EM,EMM,Filip,EFW}. Yet, little is known about their global topology \cite{KZ,CMZ}. In genus one, double ramification loci exhaust the strata of meromorphic $k$-differentials, due to the triviality of the canonical bundle.

\subsection{Results} We begin in \Cref{sec: rank 1} with the classical (rank-one) case. The main result is:

\begin{customthm}{X}[\Cref{thm: Euler char rank one}] \label{thm: rank one introduction} Given $a=(a_1,\ldots,a_n)$ the orbifold Euler characteristic of $\DR_{1,n}(a)$ is given by:
\[ \chi_\orb(\DR_{1,n}(a)) = \dfrac{(-1)^{n-1}(n-1)!}{24} \left( \sum_{i=1}^n a_i^2 - 2 \right).\]
\end{customthm}
\noindent The combinatorial prefactor equals $-\chi_\orb(\Mcal_{1,n})/2$ (see \Cref{lem: HarerZagier}). The above formula is obtained independently (and with a different proof) in the upcoming \cite{CMS}.

In \Cref{sec: higher rank} we proceed to the higher-rank case. Here the input data is an $r \times n$ matrix $A$ such that each row sums to zero. The associated higher-rank double ramificaton locus
\[ \DR^r_{g,n}(A) \subseteq \Mcal_{g,n} \]
is the intersection of the $r$ double ramification loci associated to the rows of $A$. We determine its orbifold Euler characteristic.

\begin{customthm}{Y}[\Cref{thm: higher rank DR}] \label{thm: higher rank introduction} The orbifold Euler characteristic of the higher-rank double ramification locus $\DR^r_{1,n}(A)$ is given by:
\[ \chi_\orb(\DR^r_{1,n}(A)) = \dfrac{(-1)^n}{12} \sum_{k=0}^r \sum_{\substack{\Ical \, \vdash [n] \\ \ell(\Ical)=k+1}} (-1)^k (\# I_1-1)! \cdots (\# I_{k+1}-1)! \cdot G_{k \times k}(A_\Ical)^2.\]
The sum is over partitions $\Ical=\{I_1,\ldots,I_{k+1}\}$ of the set $[n]=\{1,\ldots,n\}$, the contraction matrix $A_\Ical$ is the $r \times (k+1)$ matrix obtained by summing the columns of $A$ associated to each part of $\Ical$, and $G_{k \times k}(A_\Ical)$ denotes the greatest common divisor of all the $k \times k$ minors of $A_\Ical$.
\end{customthm}

We note in particular that in higher rank, the orbifold Euler characteristic is not polynomial in the entries of the matrix $A$.

In \Cref{sec: leading term} we simplify the $k=r$ term in the above sum, expressing it in terms of the $r \times r$ minors of the original matrix~$A$ (\Cref{prop: leading term}). When $r=1$ the formulae in Theorems \ref{thm: rank one introduction} and \ref{thm: higher rank introduction} are superficially different, but in \Cref{sec: comparing theorems X and Y} we identify them using symmetric function theory.

The following example families convey the flavour of \Cref{thm: higher rank introduction}:
\begin{align*} \chi_\orb \DR_{1,3}^2 \! \begin{bmatrix} a_1 & a_2 & a_3 \\ b_1 & b_2 & b_3 \end{bmatrix} & = \dfrac{1}{12} \big( \!-\!2 + \gcd(a_1,b_1)^2 + \gcd(a_2,b_2)^2 + \gcd(a_3,b_3)^2 - (a_1 b_2 - a_2 b_1)^2 \big), \\[0.3cm]
\chi_\orb \DR_{1,4}^2 \! \begin{bmatrix} a & - a & 0 & 0 \\ 0 & 0 & b & -b \end{bmatrix} & = \dfrac{1}{12} \big( 6 - 4a^2 - 4b^2 - 2\gcd(a,b)^2 + 4(ab)^2 \big), \\[0.3cm]
\chi_\orb \DR_{1,4}^3 \begin{bmatrix} a & -a & 0 & 0 \\ 0 & b & -b & 0 \\ 0 & 0 & c & -c \end{bmatrix} & = \dfrac{1}{12} \big( 6 - 2a^2 - b^2 - 2c^2 - 2\gcd(a,b)^2 - \gcd(a,c)^2 - 2\gcd(b,c)^2 \\
& \qquad \qquad - \gcd(a,b,c)^2 + (ab)^2 + (ac)^2 + (bc)^2 + 3\gcd(ab,ac,bc)^2 - (abc)^2 \big).
\end{align*}

\subsection{Proof strategy} The proof hinges on a recurrence relation, which we illustrate in rank one. Fix $a=(a_1,\ldots,a_n,a_{n+1})$ with $\Sigma_{i=1}^{n+1} a_i = 0$ and suppose without loss of generality that $a_{n+1} \neq 0$. Consider the forgetful morphism:
\begin{equation} \label{eqn: forgetful map introduction} \DR_{1,n}(a_1,\ldots,a_n,a_{n+1}) \to \Mcal_{1,n}. \end{equation}
Given $(C,p_1,\ldots,p_n) \in \Mcal_{1,n}$ a choice of lift is a choice of $p_{n+1} \in C \setminus \{ p_1,\ldots,p_n \}$ such that:
\[ \OO_C(a_{n+1} p_{n+1}) \cong \OO_C(-\Sigma_{i=1}^n a_i p_i).\]
Since we work in genus one, the simple expectation is that there are precisely $a_{n+1}^2$ such lifts. However there is a complication: we must exclude the possibility that $p_{n+1} = p_i$ are valid lifts. This amounts to removing the following double ramification loci from $\Mcal_{1,n}$:
\begin{equation} \label{eqn: DR locus in recursion introduction} \DR_{1,n}(a_1,\ldots,a_i+a_{n+1},\ldots,a_n) \subseteq \Mcal_{1,n}. \end{equation}
The proof now proceeds by cut-and-paste. The double ramification loci \eqref{eqn: DR locus in recursion introduction} define a stratification of $\Mcal_{1,n}$ which pulls back to a stratification of $\DR_{1,n}(a_1,\ldots,a_n,a_{n+1})$. We then study the map \eqref{eqn: forgetful map introduction} stratum by stratum; on each locally-closed stratum it is \'etale of calculable degree.

Higher-rank double ramification loci inevitably enter into this argument, since they arise as deeper strata. However, these higher-rank loci do not appear in the final statement of the recursion: after assembling the contributions we observe a remarkable collection of terms, collapsing the formula and producing a purely rank-one statement:

\begin{customthm}{Z}[\Cref{thm: recursion in Grothendieck ring}] \label{thm: recursion introduction} The orbifold Euler characteristic of $\DR_{1,n+1}(a)$ satisfies the following recurrence:
\[ \chi_\orb(\DR_{1,n+1}(a)) = a_{n+1}^2 \, \chi_\orb(\Mcal_{1,n}) - \sum_{i=1}^n \chi_\orb( \DR_{1,n}(a_1,\ldots,a_i + a_{n+1},\ldots,a_n)).\]
\end{customthm}
\noindent The proof of \Cref{thm: rank one introduction} then proceeds by induction on $n$, and is straightforward once the correct formula is guessed.

The higher-rank recursion (\Cref{thm: recursion higher rank}) is no more complicated, however it only applies to matrices $A$ of a special form. We reduce to such matrices using $\GL_r(\Z)$-invariance of the final formula (\Cref{lem: RHS invariant} and \Cref{prop: sufficient to prove for special matrices}). The proof then proceeds by induction on $(r,n)$ in lexicographic order. Again, the difficult step is guessing the correct formula.

Our proof in fact establishes a recurrence in the \'etale Grothendieck ring of orbifolds, the quotient of the Grothendieck ring of orbifolds by the relations
\[ [Y] = [X] \cdot [F] \]
for any \'etale morphism $Y \to X$ with fibre $F$. However, the \'etale Grothendieck ring of orbifolds is isomorphic to $\Q$, the isomorphism being given by the orbifold Euler characteristic: see \cite{ShinderMO} for a proximate argument. This refinement thus contains no additional information.

\begin{remark} The orbifold Euler characteristic, also known as the Euler--Satake characteristic, was introduced by Satake in his foundational work on orbifolds \cite[Section~3]{Satake}. It is additive with respect to stratifications and multiplicative with respect to \'etale covers. It is closely related to the Euler characteristic of classifying spaces \cite{Wall}, a perspective adopted by Harer--Zagier \cite{HarerZagier}. 

The above notion is not to be confused with the \textbf{stringy orbifold Euler characteristic} (confusingly, sometimes also called the orbifold Euler characteristic) arising from physics \cite{DHVW,HirzebruchHoefer}. This is not multiplicative with respect to \'etale covers, and typically differs from both the Euler--Satake characteristic and the naive Euler characteristic.
\end{remark}

\subsection{Intersection theory} Given a normal crossings compactification $\DR_{g,n}^r(A) \subseteq \ol{\DR}_{g,n}^{\, r}(A)$ the logarithmic Gauss--Bonnet--Poincar\'e--Hopf formula identifies
\begin{equation} \label{eqn: integral of log tangent} \chi_\orb(\DR_{g,n}^r(A)) = \int_{\ol{\DR}_{g,n}^{\, r}(A)} c_{\text{top}}( \Theta ) \end{equation}
where $\Theta$ is the logarithmic tangent bundle of the compactification (see e.g. \cite[Proposition~2.1]{CMZ}).

Unfortunately, no such normal crossings compactification has yet been constructed. While compactifications of (higher-rank) double ramification loci have been studied extensively, and play a central role in the (logarithmic) intersection theory of the moduli space of curves and enumerative geometry, these compactifications either contain spurious components or are highly singular at the boundary \cite{Li1, Li2, GathmannThesis, GraberVakil, FaberPandharipandeRelative, MaulikPandharipande, Hain, GZ, BSSZ, JPPZ, HKP, RangProduct, HPS, MarcusWise, PRvZ, JPPZ2, TsengYouHigherGenus, AbreuPacini, MolchoRanganathan, HolmesAJ, HolmesSchmitt, TaleTwo, MolchoSmooth, AbreuPagani, BHPSS, CarocciNabijou1, CH, SpelierPoly,UKR,KannanSong1,KannanSong2, LogDR}.

Focusing on genus one, the triviality of the canonical bundle identifies the double ramification locus with the space of $k$-differentials for any $k$. In rank one, a normal crossings compactification is then given by the space of multiscale differentials \cite{BCGGMMultiscale}. This can be interpreted \cite{BattistellaBozlee,ChenChen} in terms of logarithmic structures and Gorenstein curves, based on the ideas of \cite{RanganathanSantosParkerWise1,RanganathanSantosParkerWise2}. These same ideas can  then be used to produce a normal crossings compactification in higher rank, by combining\cite[Theorem~B]{RanganathanSantosParkerWise2} with rigidification arguments \cite{MaulikPandharipande,UKR}.

Once such a general compactification is constructed, its fundamental class will push forward to a class on a logarithmic blowup of the moduli space of curves. This will differ from the logarithmic double ramification cycle \cite{MolchoRanganathan,LogDR} by boundary corrections arising from the spurious components of the latter.

Reversing the logic, our Theorems~\ref{thm: rank one introduction}~and~\ref{thm: higher rank introduction} calculate the specific class \eqref{eqn: integral of log tangent} of tautological integrals. Recent work of Toh \cite{Toh} studies tautological integrals on the full logarithmic double ramification cycle in rank two, obtaining formulae which also involve greatest common divisors and matrix minors, but not matrix contractions.

This intersection-theoretic strategy has been applied in other contexts. In \cite{GLN} it is used to give a new proof of the Harer--Zagier formula for the orbifold Euler characteristic of the moduli space of curves. Here the description
\[ \Theta = {R}^1 \pi_\star \omega_C^\vee (-\Sigma_{i=1}^n p_i)\]
identifies the orbifold Euler characteristic with an integral on $\overline{\Mcal}_{g,n}$ involving lambda and psi classes. The same strategy also applies to strata of differentials with a single marking \cite[Theorem~1.5]{CSS}. For a calculation strategy closer to our own, involving universal compactified Jacobians, see \cite{Wood}.

\subsection{Higher genus: the Hurwitz stratification} We describe an in-principle method for computing the orbifold Euler characteristic of the double ramification locus in all genus. This method is significantly less efficient than the genus-one recursion employed above, and we are unable to use it to obtain a closed formula. Moreover it does not generalise to higher rank.

The locus $\DR_{g,n}(a)$ is stratified by Hurwitz spaces, which fix the entire ramification profile. A Hurwitz space with $m$ branch points is an \'etale cover of $\Mcal_{0,m}$ of degree equal to the associated Hurwitz number. Using $\chi(\Mcal_{0,m}) = (-1)^{m-3} (m-3)!$ and accounting for the labelling of the branch points, this expresses each $\chi_\orb(\DR_{g,n}(a))$ as a weighted sum of Hurwitz numbers.

In the upcoming \cite{CMS} this method is computer implemented using the packages \texttt{admcycles} and \texttt{diffstrata} \cite{adm, diff} where it is in particular used to experimentally verify \Cref{thm: rank one introduction}.

\subsection*{Notation} For an integer $n \geqslant 1$ we write $[n] \colonequals \{1,\ldots,n\}$.

\subsection*{Acknowledgements} This project began at the University of Bologna, which we thank for excellent working conditions. Across the years we have benefited from discussions on double ramification loci with Francesca~Carocci, Robert~Crumplin, David~Holmes, Sam Molcho, and Dhruv~Ranganathan. We thank Johannes~Schmitt for interesting conversations on the upcoming  related work \cite{CMS}; Terry~Dekun~Song and Evgeny~Shinder for useful clarifications on the (\'etale) Grothendieck ring; and the anonymous referee for valuable expositional comments.

\subsection*{Funding} L.B. is partially supported by the European Union -- NextGenerationEU under the National Recovery and Resilience Plan (PNRR) -- Mission 4: Education and Research -- Component 2: From Research to Business -- Investment 1.1 Notice PRIN 2022 -- DD N. 104 del 2/2/2022, from title "Symplectic varieties: their interplay with Fano manifolds and derived categories", proposal code 2022PEKYBJ – CUP J53D23003840006. L.B. is a member of INdAM group GNSAGA.

\section{Rank one} \label{sec: rank 1} \noindent Fix $g=1$, $n \geqslant 1$ and a ramification vector
\[ a = (a_1,\ldots,a_n) \]
with $\Sigma_{i=1}^n a_i=0$. Let $\Jac_{1,n} \to \Mcal_{1,n}$ denote the universal Jacobian and consider the sections:
\begin{alignat*}{3} 0 \colon & \Mcal_{1,n} \to \Jac_{1,n} \qquad \qquad \qquad \aj_a \colon && \Mcal_{1,n} \to \Jac_{1,n} \\
& (C,p_1,\ldots,p_n) \mapsto \OO_C,  && (C,p_1,\ldots,p_n) \mapsto \OO_C(\Sigma_{i=1}^n a_i p_i).
\end{alignat*}
The double ramification locus $\DR_{1,n}(a)$ is the fibre product:
\[
\begin{tikzcd}
\DR_{1,n}(a) \ar[r] \ar[d] \ar[rd,phantom,"\square"] & \Mcal_{1,n} \ar[d,"\aj_a"] \\
\Mcal_{1,n} \ar[r,"0"] & \Jac_{1,n}.	
\end{tikzcd}
\]
By \cite[Proposition~1.2]{SchmittDimension} this is a smooth hypersurface in $\Mcal_{1,n}$ (though possibly empty, and excluding the trivial case $a=(0,\ldots,0)$). At the level of closed points:
\[ \DR_{1,n}(a) = \left\{ (C,p_1,\ldots,p_n) \in \mathcal{M}_{1,n} \mid \OO_C(\Sigma_{i=1}^n a_i p_i) \cong \OO_C \right\}. \]
The main result of this section is:

\begin{theorem}[\Cref{thm: rank one introduction}] \label{thm: Euler char rank one} The orbifold Euler characteristic of $\DR_{1,n}(a)$ is given by:
\begin{equation} \label{eqn: rank 1 formula} \chi_\orb(\DR_{1,n}(a)) = \dfrac{(-1)^{n-1}(n-1)!}{24} \left( \sum_{i=1}^n a_i^2 - 2 \right). \end{equation}	
\end{theorem}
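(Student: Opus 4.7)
The plan is to take orbifold Euler characteristics of the recursion in \Cref{thm: recursion introduction} and induct on $n$. Since orbifold Euler characteristic is additive on the Grothendieck ring of Deligne--Mumford stacks, \Cref{thm: recursion introduction} immediately yields
\[ \chi_\orb(\DR_{1,n+1}(a)) = a_{n+1}^2 \, \chi_\orb(\Mcal_{1,n}) \; - \; \sum_{i=1}^n \chi_\orb(\DR_{1,n}(a_1,\ldots,a_i+a_{n+1},\ldots,a_n)). \]
So once we have a closed-form expression for $\chi_\orb(\DR_{1,n}(-))$ in all cases, the value for $n+1$ is determined. The two auxiliary ingredients needed are the classical value $\chi_\orb(\Mcal_{1,n}) = (-1)^n (n-1)!/12$ (which is moreover consistent with the conjectured formula evaluated at $a=0$, since $\DR_{1,n}(0) = \Mcal_{1,n}$), and the condition $\sum_{i=1}^n a_i = -a_{n+1}$ coming from the $(n+1)$-tuple summing to zero.

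For the base case I would take $n=1$ with $a=(0)$: here $\DR_{1,1}(0) = \Mcal_{1,1}$ has orbifold Euler characteristic $-1/12$, matching \eqref{eqn: rank 1 formula}. For the inductive step, assuming the formula for $n$, I would substitute into the right-hand side of the displayed recursion, expand $(a_i + a_{n+1})^2 = a_i^2 + 2 a_i a_{n+1} + a_{n+1}^2$, and sum over $i$. The cross terms $2 a_{n+1} \sum_{i=1}^n a_i$ collapse to $-2 a_{n+1}^2$ by the zero-sum condition, and after combining the remaining like terms the prefactor $(-1)^{n-1}(n-1)!/24$ is upgraded to $(-1)^n n!/24$ and the bracketed quantity becomes $\sum_{i=1}^{n+1} a_i^2 - 2$, which is the desired formula.

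The main obstacle here is not the computation, which is routine algebra once the recursion is available; rather, it is twofold. First, one must have \Cref{thm: recursion introduction} in hand, whose proof requires the cut-and-paste stratification argument outlined in the proof strategy (and in particular requires controlling the étale degree of the forgetful map on each locally closed stratum, and observing that higher rank contributions miraculously cancel). Second, as the authors note, one must guess the correct closed-form formula in advance, since the induction is rigid and only closes up once the right Ansatz is chosen.
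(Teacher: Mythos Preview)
Your proposal is correct and follows essentially the same approach as the paper: induct on $n$ starting from $\chi_\orb(\Mcal_{1,1})=-1/12$, apply $\chi_\orb$ to the Grothendieck ring recursion of \Cref{thm: recursion introduction}, substitute the inductive hypothesis and the Harer--Zagier value for $\chi_\orb(\Mcal_{1,n})$, expand $(a_i+a_{n+1})^2$, and use $\Sigma_{i=1}^n a_i = -a_{n+1}$ to collapse the cross terms. Your identification of the two genuine inputs---the recursion itself and the guessed formula---matches the paper's own remarks.
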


This result will be deduced from the following recurrence relation.
\begin{theorem}[\Cref{thm: recursion introduction}] \label{thm: recursion in Grothendieck ring} Fix $n \geqslant 1$ and a length $n\! +\! 1$ ramification vector $a=(a_1,\ldots,a_n,a_{n+1})$. For each $i \in [n]$ define the following length $n$ ramification vector:
\[ a(i) \colonequals (a_1,\ldots,a_{i-1},a_i+a_{n+1},a_{i+1},\ldots,a_n).\]
Then the orbifold Euler characteristic of $\DR_{1,n+1}(a)$ satisfies the following recurrence relation:
\begin{equation} \label{eqn: recurrence relation Grothendieck ring} \chi_\orb(\DR_{1,n+1}(a)) = a_{n+1}^2 \, \chi_\orb(\Mcal_{1,n}) - \sum_{i=1}^n \chi_\orb(\DR_{1,n}(a(i))).\end{equation}
\end{theorem}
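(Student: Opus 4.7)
The plan is to realise the recurrence as the scissor decomposition of a common ``extended'' locus inside the universal curve. Assuming $a_{n+1} \neq 0$ by relabelling (the case $a \equiv 0$ being trivial), let $\pi \colon \Ccal_{1,n} \to \Mcal_{1,n}$ denote the universal curve with pairwise disjoint tautological sections $\sigma_1, \ldots, \sigma_n$. I would introduce the \emph{extended} double ramification locus $\widetilde{\DR} \subseteq \Ccal_{1,n}$: the closed substack on which
\[ \OO_C\bigl(a_{n+1} q + \textstyle\sum_{i=1}^n a_i p_i\bigr) \cong \OO_C \]
holds, with $q$ denoting the universal extra point. This is the natural variant of $\DR_{1,n+1}(a)$ in which $q$ is allowed to collide with any $p_i$.

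Pairwise disjointness of the $\sigma_i$ in $\Ccal_{1,n}$ produces a clean locally-closed decomposition of $\widetilde{\DR}$ into the open locus where $q$ avoids all markings --- which is exactly $\DR_{1,n+1}(a)$ --- together with the $n$ closed pieces $\widetilde{\DR} \cap \sigma_i(\Mcal_{1,n})$. On the $i$-th closed piece the Abel--Jacobi condition becomes $\OO_C((a_i + a_{n+1}) p_i + \sum_{j \neq i} a_j p_j) \cong \OO_C$, identifying it with $\DR_{1,n}(a(i)) \subseteq \Mcal_{1,n}$; scissor relations in the Grothendieck ring therefore give
\[ \widetilde{\DR} = \DR_{1,n+1}(a) + \sum_{i=1}^n \DR_{1,n}(a(i)). \]

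To finish, it suffices to show $\widetilde{\DR} = a_{n+1}^2 \Mcal_{1,n}$, which I would deduce from an \'etale degree count. Using $p_1$ as origin trivialises the fibres of $\pi$ as elliptic curves, and the resulting Abel--Jacobi isomorphism identifies $\widetilde{\DR} \to \Mcal_{1,n}$ with the pullback of the multiplication-by-$a_{n+1}$ isogeny on the universal Jacobian along a canonical section. In characteristic zero this isogeny on a relative abelian scheme of dimension $g$ is \'etale of degree $N^{2g}$; for $g = 1$ and $N = a_{n+1}$ we obtain $a_{n+1}^2$. The main obstacle is translating this \'etale count into the asserted equality of classes in the Grothendieck ring: the cover $\widetilde{\DR} \to \Mcal_{1,n}$ is a non-trivial torsor for the $a_{n+1}$-torsion subgroup scheme of the universal Jacobian and does not split Zariski-locally, so the recurrence should be read in a variant of the Grothendieck ring of Deligne--Mumford stacks in which finite \'etale covers multiply by their degree --- a setting adapted to the orbifold Euler characteristics which are the sole invariant extracted by the subsequent deduction of \Cref{thm: Euler char rank one}.
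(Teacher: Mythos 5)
Your approach is correct in the main case $a_{n+1} \neq 0$ and takes a genuinely different route from the paper. The paper stratifies $\Mcal_{1,n}$ (and pulls back the stratification to $\DR_{1,n+1}(a)$) by closed strata indexed by subsets $I \subseteq [n]$, shows the forgetful map is \'etale of \emph{varying} degree $a_{n+1}^2 - |I|$ on each locally-closed stratum, and reassembles by a double-counting argument. You instead complete $\DR_{1,n+1}(a)$ inside the universal curve to the extended locus $\widetilde{\DR}$, where the new point may collide with the markings; this single space is \'etale of \emph{constant} degree $a_{n+1}^2$ over $\Mcal_{1,n}$, and the scissor decomposition of $\widetilde{\DR}$ along the sections $\sigma_i$ yields the recursion in one step, with no stratification or reassembly. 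The two proofs encode the same geometry (counting $a_{n+1}$-division points of a line bundle on an elliptic curve), but yours packages it more efficiently. You are also more careful than the paper about the fact that a finite \'etale cover of degree $d$ does not in general give $[X] = d[Y]$ in the Grothendieck ring of varieties; the paper makes the same loose claim silently, and your remark that the argument should be read as multiplicativity of orbifold Euler characteristics under \'etale covers is the correct resolution, since that is the only invariant used downstream.

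There is, however, one real gap. You reduce to $a_{n+1} \neq 0$ ``by relabelling,'' but the recurrence is \emph{not} symmetric in the entries of $a$: both the forgetful map and the vectors $a(i)$ are defined with respect to the last coordinate specifically, so permuting markings proves a different instance of the theorem, not the one you need. When $a_{n+1} = 0$ but $a \not\equiv 0$, your \'etale argument breaks down: the condition defining $\widetilde{\DR}$ no longer involves $q$, so $\widetilde{\DR}$ is the pullback of the full universal curve to $\DR_{1,n}(a_1,\ldots,a_n) \subseteq \Mcal_{1,n}$, a genus-one fibration over a proper closed substack rather than a finite cover of $\Mcal_{1,n}$, and ``\'etale of degree $0$'' is not the right reading. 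The paper dispatches this case separately in one line by observing that the fibres of $\DR_{1,n+1}(a_1,\ldots,a_n,0) \to \DR_{1,n}(a_1,\ldots,a_n)$ are $n$-punctured elliptic curves (Euler characteristic $-n$); your proof needs the same small patch rather than a relabelling.
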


\begin{proof}
For $a_{n+1}=0$ the result follows immediately by studying the fibres of the smooth and representable morphism:
\[ \DR_{1,n+1}(a_1,\ldots,a_n,0) \to \DR_{1,n}(a_1,\ldots,a_n). \]
Thus assume $a_{n+1} \neq 0$ and consider the representable morphism forgetting the final marking:
\begin{equation} \label{eqn: proof rank 1 forgetful map 2} \DR_{1,n+1}(a_1,\ldots,a_n,a_{n+1}) \to \Mcal_{1,n}.
\end{equation}
We begin with the $n=1$ case, which is instructive. We claim that the morphism \eqref{eqn: proof rank 1 forgetful map 2} is \'etale of degree $a_2^2-1$. Indeed we have $a_2=-a_1$, and given $(C,p_1) \in \Mcal_{1,1}$ a lift consists of a choice of $p_2 \in C$ such that $\OO_C(a_2 p_2) \cong \OO_C(a_2 p_1)$ and $p_2 \neq p_1$. There are precisely $a_2^2-1$ of these, and we obtain the relation
\begin{equation} \label{eqn: proof Grothendieck relation n equals 1 case} \chi_\orb(\DR_{1,2}(a_1,a_2)) = (a_2^2-1) \chi_\orb(\Mcal_{1,1}).\end{equation}
In this case we only have $a(1)=(a_1+a_2)=(0)$ and so $\DR_{1,1}(a(1)) = \Mcal_{1,1}$. Therefore \eqref{eqn: proof Grothendieck relation n equals 1 case} is equivalent to \eqref{eqn: recurrence relation Grothendieck ring} and this completes the $n=1$ case.

For $n \geqslant 2$, however, the morphism \eqref{eqn: proof rank 1 forgetful map 2} is not \'etale. We will now define stratifications of the source and target, and show that the morphism is \'etale on each locally-closed stratum, with degree depending on the stratum.

We first define the stratification on $\Mcal_{1,n}$. Recall for $i \in [n]$ the length $n$ ramification vector:
\[ a(i) \colonequals (a_1,\ldots,a_{i-1},a_i+a_{n+1},a_{i+1},\ldots,a_n).\]
We define the depth-$1$ closed strata in $\Mcal_{1,n}$ to be:
\[ \DR_{1,n}(a(1)),\ldots,\DR_{1,n}(a(n)).\]
More generally, the depth-$k$ closed strata are indexed by subsets $I = \{ i_1,\ldots,i_k\} \subseteq [n]$ of size $k$ and given by:
\[ \DR_{1,n}(a(I)) = \DR_{1,n} \! \begin{bmatrix} a(i_1) \\ \vdots \\ a(i_k) \end{bmatrix} \colonequals \bigcap_{j=1}^k \DR_{1,n}(a(i_j)).\]
Note that these intersections are often not dimensionally transverse, for instance if two rows admit a common divisor. Therefore a depth-$k$ stratum may have smaller codimension than $k$. 

We put a partial order on the closed strata $\DR_{1,n}(a(I))$ using the inclusion order on the index sets $I$ (note that inside $\Mcal_{1,n}$ there may be additional containments of closed strata beyond those forced by the index sets). From the closed strata we obtain locally-closed strata by removing the deeper strata:
\[ \DR_{1,n}^\circ(a(I)) \colonequals \DR_{1,n}(a(I)) \setminus \bigcup_{I \subsetneq J \subseteq [n]} \DR_{1,n}(a(J)). \]
Some locally-closed strata may be empty, but this does not affect the argument.

This stratification of $\Mcal_{1,n}$ pulls back along \eqref{eqn: proof rank 1 forgetful map 2} to give a stratification of $\DR_{1,n+1}(a)$ which we describe explicitly. The depth-$1$ closed strata in this stratification are
\[ \DR_{1,n+1} \! \left[ \begin{array}{cccc} \!\!\! a_1 & \cdots & a_n & a_{n+1} \!\!\!\! \\ \multicolumn{3}{c}{\!\!\! \mbox{-----}a(i)\mbox{-----}} & 0 \!\!\!\! \end{array} \right] \]
while the depth-$k$ closed strata consist of the intersections. We use the same notation as before for the locally-closed strata. Restricting \eqref{eqn: proof rank 1 forgetful map 2} to a locally-closed stratum in $\DR_{1,n+1}(a)$ of depth $k$ we obtain a map:
\[ \DR_{1,n+1}^\circ \! \left[ \begin{array}{cccc} \!\!\! a_1 & \cdots & a_n & a_{n+1} \!\!\!\! \\ \multicolumn{3}{c}{\!\!\! \mbox{-----}a(i_1)\mbox{-----}} & 0 \!\!\!\! \\ \!\!\! & \vdots & & \vdots \!\!\!\! \\ \multicolumn{3}{c}{\!\!\! \mbox{-----}a(i_k)\mbox{-----}} & 0 \!\!\!\! \end{array} \right] \to \DR_{1,n}^\circ \! \begin{bmatrix} \mbox{-----}a(i_1)\mbox{-----} \\ \vdots \\ \mbox{-----}a(i_k)\mbox{-----} \end{bmatrix}.
 \]
We claim that this map is \'etale of degree $a_{n+1}^2-k$. Fix a point $(C,p_1,\ldots,p_n)$ in the target. Then a lift consists of a choice of point $p_{n+1} \in C$ such that
\begin{equation} \label{eqn: condition for lifting p} \OO_C(a_{n+1} p_{n+1}) \cong \OO_C(-\Sigma_{i=1}^n a_i p_i).\end{equation}
There are $a_{n+1}^2$ of these, but we need to examine the possibilities $p_{n+1}=p_i$. Let $I=\{i_1,\ldots,i_k\} \subseteq [n]$ be the set indexing the given stratum. For $i \in I$ the point $(C,p_1,\ldots,p_n)$ in the target satisfies
\[ \OO_C(a_1 p_1 + \ldots + a_{i-1}p_{i-1} + (a_i+a_{n+1})p_i + a_{i+1}p_{i+1} + \ldots + a_n p_n) \cong \OO_C \]
because $a(i)$ appears as a row in the target matrix. It follows that the choice $p_{n+1}=p_i$ satisfies the ramification condition \eqref{eqn: condition for lifting p}. On the other hand for $i \in [n] \setminus I$ the fact that we have removed the intersections with deeper strata ensures that
\[ \OO_C(a_1 p_1 + \ldots + a_{i-1}p_{i-1} + (a_i+a_{n+1})p_i + a_{i+1}p_{i+1} + \ldots + a_n p_n) \not\cong \OO_C \]
and therefore the choice $p_{n+1}=p_i$ does not satisfy the ramification condition \eqref{eqn: condition for lifting p}. It follows that we must remove $|I|=k$ of the choices of $p_{n+1}$ and that the remaining choices give valid lifts. This shows that the map is \'etale of degree $a_{n+1}^2-k$ as claimed. We obtain an identity:
\[ \chi_\orb (\DR_{1,n+1}^\circ \! \left[ \begin{array}{cccc} \!\!\! a_1 & \cdots & a_n & a_{n+1} \!\!\!\! \\ \multicolumn{3}{c}{\!\!\!\mbox{-----}a(i_1)\mbox{-----}} & 0 \!\!\!\! \\ \!\!\! & \vdots & & \vdots \!\!\!\! \\ \multicolumn{3}{c}{\!\!\!\mbox{-----}a(i_k)\mbox{-----}} & 0 \!\!\!\! \end{array} \right]) = (a_{n+1}^2-k) \, \chi_\orb (\DR_{1,n}^\circ \! \begin{bmatrix} \mbox{-----}a(i_1)\mbox{-----} \\ \vdots \\ \mbox{-----}a(i_k)\mbox{-----} \end{bmatrix}).
 \]
The above formula is also valid when $a_{n+1}^2-k < 0$, for in this case we claim that both source and target strata are empty. Indeed, suppose otherwise and choose a point $(C,p_1,\ldots,p_n)$ in the target stratum. The ramification conditions imply that
\[ \OO_C(a_{n+1}p_i) \cong \OO_C(a_{n+1}p_j) \]
for all $i,j \in I$. There are at most $a_{n+1}^2$ such points of $C$, and it follows that $k \leqslant a_{n+1}^2$ which contradicts the assumption. We conclude that if $a_{n+1}^2 - k < 0$ then the source and target strata are empty, so the above formula trivially holds.

We now employ the scissor relations with respect to the stratification of the source of \eqref{eqn: proof rank 1 forgetful map 2}:
\begin{align} \label{eqn: proof rank 1 intermediate Grothendieck ring relation} \nonumber \chi_\orb( \DR_{1,n+1}(a) ) & = \sum_{k=0}^n \sum_{\{i_1,\ldots,i_k\} \subseteq [n]} \chi_\orb( \DR_{1,n+1}^\circ \! \left[ \begin{array}{cccc} \!\!\! a_1 & \cdots & a_n & a_{n+1} \!\!\!\! \\ \multicolumn{3}{c}{\!\!\! \mbox{-----}a(i_1)\mbox{-----}} & 0 \!\!\!\! \\ & \!\!\! \vdots & & \vdots \!\!\!\! \\ \multicolumn{3}{c}{\!\!\! \mbox{-----}a(i_k)\mbox{-----}} & 0 \!\!\!\! \end{array} \right]) \\[0.3cm]
& = \sum_{k=0}^n \sum_{\{i_1,\ldots,i_k\} \subseteq [n]} (a_{n+1}^2-k) \, \chi_\orb( \DR_{1,n}^\circ \! \begin{bmatrix} \mbox{-----}a(i_1)\mbox{-----} \\ \vdots \\ \mbox{-----}a(i_k)\mbox{-----} \end{bmatrix}). \end{align}
Having used the scissor relations to deconstruct the source, we now use them to reconstruct the target. The key relations are:
\begin{align*} \chi_\orb( \Mcal_{1,n} ) & = \sum_{k=0}^n \sum_{\{i_1,\ldots,i_k\} \subseteq [n]} \chi_\orb( \DR_{1,n}^\circ \! \begin{bmatrix} \mbox{-----}a(i_1)\mbox{-----} \\ \vdots \\ \mbox{-----}a(i_k)\mbox{-----} \end{bmatrix}), \\[0.3cm]
\chi_\orb( \DR_{1,n}(a(i)) ) & = \sum_{k=1}^n \sum_{\substack{\{i_1,\ldots,i_k\} \subseteq [n] \\ i \in \{i_1,\ldots,i_k\}}} \chi_\orb( \DR_{1,n}^\circ \! \begin{bmatrix} \mbox{-----}a(i_1)\mbox{-----} \\ \vdots \\ \mbox{-----}a(i_k)\mbox{-----} \end{bmatrix}).
\end{align*}
Examining \eqref{eqn: proof rank 1 intermediate Grothendieck ring relation} we see that each term of the form
\[ k \cdot \chi_\orb( \DR_{1,n}^\circ \! \begin{bmatrix} \mbox{-----}a(i_1)\mbox{-----} \\ \vdots \\ \mbox{-----}a(i_k)\mbox{-----} \end{bmatrix}) \]
participates in precisely $k$ of the $\chi_\orb(\DR_{1,n}(a(i)))$. Assembling, we conclude from \eqref{eqn: proof rank 1 intermediate Grothendieck ring relation} that
\[ \chi_\orb( \DR_{1,n+1}(a) ) = a_{n+1}^2 \, \chi_\orb( \Mcal_{1,n} ) - \sum_{i=1}^n \chi_\orb( \DR_{1,n}(a(i)) ) \]
as required.
\end{proof}

\begin{proof}[Proof of \Cref{thm: Euler char rank one}] We induct on $n$. The base case $n=1$ is trivial; we must have $a_1=0$ and then
\[ \chi_\orb(\DR_{1,1}(0)) = \chi_\orb(\Mcal_{1,1}) = -1/12 \]
by the Harer--Zagier formula (\Cref{lem: HarerZagier}), which agrees with \eqref{eqn: rank 1 formula}. Now assume the claim holds for ramification vectors of length $n$ and consider a ramification vector $a=(a_1,\ldots,a_n,a_{n+1})$ of length $n\! + \! 1$. We then have
\begin{align*} \chi_\orb( \DR_{1,n+1}(a) ) & = a_{n+1}^2 \, \chi_\orb( \Mcal_{1,n} ) - \sum_{i=1}^n \chi_\orb( \DR_{1,n}(a(i)) ) \\
& = a_{n+1}^2 \left( \dfrac{(-1)^n (n-1)!}{12} \right) - \sum_{i=1}^n \dfrac{(-1)^{n-1}(n-1)!}{24} \left( \Sigma_{j \neq i} a_j^2 + (a_i + a_{n+1})^2 - 2 \right) \\[0.2cm]
& = \dfrac{(-1)^n (n-1)!}{24} \left( 2a_{n+1}^2 + \sum _{i=1}^n \left( \Sigma_{j=1}^{n+1} a_j^2 + 2a_i a_{n+1} - 2 \right) \right) \\[0.2cm]
& = \dfrac{(-1)^n (n-1)!}{24} \left( n \left( \Sigma_{j=1}^{n+1} a_j^2 - 2 \right) + 2a_{n+1}^2 + 2a_{n+1} \Sigma_{i=1}^n a_i  \right) \\[0.2cm]
& = \dfrac{(-1)^n n!}{24} \left( \sum_{i=1}^{n+1} a_i^2 - 2 \right)
\end{align*}
where the first equality follows from Theorem~\ref{thm: recursion in Grothendieck ring}, the second equality follows from the Harer--Zagier formula (\Cref{lem: HarerZagier}), and the induction hypothesis, and the last equality follows from the fact that $\Sigma_{i=1}^{n+1} a_i = 0$. This completes the induction step.
\end{proof}

The above proof uses the Harer--Zagier formula \cite{HarerZagier} for the orbifold Euler characteristic of the moduli space of curves. In genus one this admits an elementary proof, presumably well-known to experts, which we include for completeness.

\begin{lemma}[Harer--Zagier in genus one] \label{lem: HarerZagier} For $n \geqslant 1$ we have:
\[ \chi_\orb(\Mcal_{1,n}) = \dfrac{(-1)^n (n-1)!}{12}.\]
\end{lemma}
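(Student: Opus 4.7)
The plan is to induct on $n$, computing $\chi_\orb(\Mcal_{1,1})$ by hand and then extracting a recursion from the universal curve.

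For the base case $n=1$, the coarse space of $\Mcal_{1,1}$ is the $j$-line $\A^1$, which I would stratify by automorphism type: the points $j=0$ and $j=1728$ have automorphism groups $\Z/6$ and $\Z/4$ respectively, while every other elliptic curve has generic automorphism group $\Z/2$ generated by the elliptic involution $[-1]$. Additivity of $\chi_\orb$ over this stratification yields
\[
\chi_\orb(\Mcal_{1,1}) = \tfrac{1}{2}\chi(\A^1 \setminus \{0,1728\}) + \tfrac{1}{6} + \tfrac{1}{4} = -\tfrac{1}{2} + \tfrac{1}{6} + \tfrac{1}{4} = -\tfrac{1}{12},
\]
which matches the formula.

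For the induction step, I would exploit the universal curve $\pi \colon \Ccal_{1,n} \to \Mcal_{1,n}$: a smooth, proper, representable morphism whose geometric fibres are elliptic curves. Since $\chi(E)=0$, multiplicativity of $\chi_\orb$ under representable smooth proper fibrations with constant fibre Euler characteristic gives $\chi_\orb(\Ccal_{1,n}) = 0$. The total space stratifies according to whether the universal extra point coincides with one of the existing markings:
\[
\Ccal_{1,n} = \Mcal_{1,n+1} \,\sqcup\, \bigsqcup_{i=1}^n \sigma_i(\Mcal_{1,n}),
\]
where $\sigma_i \colon \Mcal_{1,n} \hookrightarrow \Ccal_{1,n}$ is the tautological section picking out the $i$-th marked point (a closed immersion onto its image). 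Additivity of $\chi_\orb$ therefore yields the recursion
\[
\chi_\orb(\Mcal_{1,n+1}) = -\,n \cdot \chi_\orb(\Mcal_{1,n}),
\]
and iterating from the base case immediately produces $\chi_\orb(\Mcal_{1,n}) = (-1)^n(n-1)!/12$.

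The main technical input is multiplicativity of $\chi_\orb$ under representable smooth proper fibrations with constant fibre Euler characteristic. This is standard, following from \'etale-local triviality and additivity; in our setting one can alternatively stratify $\Mcal_{1,n}$ finely enough that $\pi$ becomes Zariski-locally a product and conclude $\chi_\orb(\Ccal_{1,n})=0$ directly from $\chi(E)=0$ and additivity. An entirely different route would be to stratify $\Mcal_{1,n}$ over the $j$-line and carry out an explicit orbit-stabiliser count for the $\op{Aut}(E,0)$-action on configurations of $n$ distinct points of $E$, but the bookkeeping is considerably heavier and less illuminating.
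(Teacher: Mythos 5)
Your proof is correct and is essentially the same as the paper's: identical base case on the $j$-line, and the same multiplicativity-plus-induction for the step. The only cosmetic difference is that the paper applies multiplicativity directly to $\Mcal_{1,n+1}\to\Mcal_{1,n}$ (fibre $C\setminus\{p_1,\ldots,p_n\}$, Euler characteristic $-n$), whereas you first get $\chi_\orb(\Ccal_{1,n})=0$ from the proper universal curve and then subtract the $n$ sections — a trivial repackaging of the same computation.
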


\begin{proof} We proceed by induction on $n$. For the base case we note that $\Mcal_{1,1}$ has $\Aaff^{\!1}$ as its coarse moduli space, with the general point having an automorphism group of order $2$, and two special points $\xi_{1728}$ and $\xi_0$ having automorphism groups of orders $4$ and $6$. We thus have:
\[ \chi_\orb(\Mcal_{1,1}) = \frac{1}{2}  \, \chi(\Aaff^{\!1} \setminus \{\xi_{1728},\xi_0\}) + \frac{1}{4} \, \chi(\xi_{1728}) + \frac{1}{6} \, \chi(\xi_0) = -\frac{1}{2} + \frac{1}{4} + \frac{1}{6} = -\frac{1}{12}.\]
For the induction step, consider the forgetful morphism $\Mcal_{1,n+1} \to \Mcal_{1,n}$. This morphism is representable, and each fibre is a genus-one curve $C$ with the points $p_1,\ldots,p_n$ removed. We conclude:
\begin{align*} \chi_\orb(\Mcal_{1,n+1}) & = \chi(C \setminus \{p_1,\ldots,p_n\}) \cdot \chi_\orb(\Mcal_{1,n}) \\
& = (-n) \cdot \dfrac{(-1)^n (n-1)!}{12} \\
& = \dfrac{(-1)^{n+1} ((n+1)-1)!}{12}. \qedhere
\end{align*}
\end{proof}

\section{Higher rank} \label{sec: higher rank}

\noindent We proceed to the higher-rank case. The recursion generalises directly (\Cref{thm: recursion higher rank}) and the induction strategy still applies. The difficulty is guessing the correct formula.

Fix $g=1$, $r \geqslant 1$, $n \geqslant 1$ and an $r \times n$ integer matrix
\[ A = 
\begin{bmatrix}
a^{(1)}_1 & \cdots & a^{(1)}_n \\
 & \vdots &  \\
a^{(r)}_1 & \cdots & a^{(r)}_n	
\end{bmatrix}
\]
such that each row sums to zero: $\Sigma_{i=1}^n a^{(j)}_i = 0$ for all $j \in [r]$. We refer to this as a \textbf{double ramification matrix}. Given an $r \times n$ double ramification matrix $A$, the associated \textbf{rank-$r$ double ramification locus} is denoted and defined: 
\[ \DR^r_{1,n}(A) \colonequals \bigcap_{i=1}^r \DR_{1,n}(a^{(i)}) \subseteq \Mcal_{1,n} \]
where $a^{(i)}$ denotes the $i$th row of $A$. Its closed points correspond to marked curves $(C,p_1,\ldots,p_n)$ satisfying the $r$ simultaneous equations:
\begin{align*}
\OO_C(\Sigma_{i=1}^n a^{(1)}_i & p_i) \cong \OO_C, \\
\vdots \\
\OO_C(\Sigma_{i=1}^n a^{(r)}_i & p_i) \cong \OO_C.
\end{align*}
The main result of this section (\Cref{thm: higher rank DR}) gives a formula for the orbifold Euler characteristic of this locus.

\begin{remark}
While $\DR_{1,n}^r(A) \subseteq \Mcal_{1,n}$ has expected dimension $n-r$ its actual dimension may be larger, for instance if some rows of $A$ are linearly dependent over $\Z$. It can also be empty, for instance if $A$ contains a row of the form $(1,-1,0,\ldots,0)$. The formula below for the orbifold Euler characteristic holds in all cases. 
\end{remark}

In \Cref{sec: higher rank formula} we state the formula (\Cref{thm: higher rank DR}). In \Cref{sec: reduction via GL} we use $\GL_r(\Z)$-invariance to reduce to a special class of double ramification matrices (\Cref{prop: sufficient to prove for special matrices}), and in \Cref{sec: higher rank recursion} we establish an orbifold Euler characteristic recursion for these matrices (\Cref{thm: recursion higher rank}). In \Cref{sec: matrix lemmas} we establish an important lemma on the linear algebra of double ramification matrices (\Cref{lem: matrix lemma 2}). This is used in the proof of the formula, which is given in \Cref{sec: higher rank proof}. Having obtained the formula, in \Cref{sec: leading term} we provide a simplification of its leading term, and finally in \Cref{sec: comparing theorems X and Y} we compare it to the rank-one formula obtained in the previous section.

\subsection{Formula} \label{sec: higher rank formula} We establish the necessary notation. A \textbf{partition} $\Ical \vdash [n]$ is an unordered collection of subsets
\[ \Ical = \{ I_1,\ldots,I_{\ell(\Ical)} \} \]
with each $I_j \neq \emptyset$ and $[n]=I_1 \sqcup \cdots \sqcup I_{\ell(\Ical)}$. Given a partition $\Ical \vdash [n]$ the associated \textbf{contraction} of $A$ is obtained by summing the columns associated to each part of $\Ical$,
\[ A_\Ical \colonequals 
\begin{bmatrix}
a^{(1)}_{I_1} & \cdots & a^{(1)}_{I_{\ell(\Ical)}} \\
 & \vdots &  \\
a^{(r)}_{I_1} & \cdots & a^{(r)}_{I_{\ell(\Ical)}}
\end{bmatrix}
\]
where $a_I^{(j)} \colonequals \Sigma_{i \in I} a^{(j)}_i$ for any subset $I \subseteq [n]$. The contraction $A_\Ical$ is well-defined up to permutation of the columns and is an $r \times \ell(\Ical)$ double ramification matrix. For $0 \leqslant k \leqslant \min (r, \ell(\Ical) )$ we then define
\[ G_{k \times k}(A_\Ical) \in \Z \]
to be the greatest common divisor of all the $k \times k$ minors of $A_{\Ical}$. This is well-defined up to sign. By convention we take:
\[ G_{0 \times 0}(A_\Ical)=1, \qquad \gcd(m_1,\ldots,m_l,0) = \gcd(m_1,\ldots,m_l), \qquad \gcd(\emptyset)=0.\]
We are now ready to state the main result.

\begin{theorem}[\Cref{thm: higher rank introduction}] \label{thm: higher rank DR} Fix an $r \times n$ double ramification matrix $A$. The orbifold Euler characteristic of the associated rank $r$ double ramification locus is:
\begin{equation} \label{eqn: higher rank formula} \chi_\orb(\DR^r_{1,n}(A)) = \dfrac{(-1)^n}{12} \sum_{k=0}^r \sum_{\substack{\Ical \, \vdash [n] \\ \ell(\Ical)=k+1}} (-1)^k (\# I_1-1)! \cdots (\# I_{k+1}-1)! \cdot G_{k \times k}(A_\Ical)^2.\end{equation}
\end{theorem}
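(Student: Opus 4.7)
The plan is to follow the three-step strategy foreshadowed in the introduction: (i) verify $\GL_r(\Z)$-invariance of both sides of \eqref{eqn: higher rank formula} and use this to reduce to a special class of matrices (the analogues of \Cref{lem: RHS invariant} and \Cref{prop: sufficient to prove for special matrices}); (ii) prove a Grothendieck-ring recursion for such matrices, generalising \Cref{thm: recursion in Grothendieck ring}; (iii) perform a lexicographic induction on $(r,n)$, guided by the recursion.

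For step (i), I would observe that $\DR^r_{1,n}(A)$ depends only on the sublattice of $\Z^n$ spanned by the rows of $A$, hence is invariant under the left action of $\GL_r(\Z)$. The right-hand side is also invariant because elementary row operations on $A$ induce matching operations on every contraction $A_\Ical$, and the gcd $G_{k \times k}$ of $k \times k$ minors is a $\GL_r(\Z)$-invariant of a matrix (standard Smith-normal-form calculus). Applying Smith normal form to the last column allows me to assume $A$ is \emph{special}: $a^{(j)}_{n+1}=0$ for $j<r$ and $a^{(r)}_{n+1}\neq 0$, with trivial cases ($a^{(r)}_{n+1}=0$) handled separately via the fibre analysis used in the $a_{n+1}=0$ case of \Cref{thm: recursion in Grothendieck ring}.

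For step (ii), I would consider the forgetful morphism $\DR^r_{1,n+1}(A)\to \DR^{r-1}_{1,n}(A')$, where $A'$ is obtained by deleting the last row and last column of $A$. Because the forgotten marking appears only in the last row of ramification conditions, the target is stratified by loci of the form $\DR^r_{1,n}(A(i))$, where $A(i)$ records absorption of $a^{(r)}_{n+1}$ into the $i$th column, in direct analogy with the rank one setup. The stratum-by-stratum analysis of \Cref{thm: recursion in Grothendieck ring} then carries over essentially verbatim: on each depth-$k$ locally closed stratum the forgetful map is \'etale of degree $(a^{(r)}_{n+1})^2-k$, producing a recursion of the same shape as \eqref{eqn: recurrence relation Grothendieck ring} in the higher rank setting.

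For step (iii), the base cases are $r=0$ (the Harer--Zagier formula, \Cref{lem: HarerZagier}) and $r=1$ (\Cref{thm: Euler char rank one}, after reconciling its formula with the rank one specialisation of \eqref{eqn: higher rank formula} as indicated in \Cref{sec: comparing theorems X and Y}). Substituting the inductive hypothesis into the recursion produces a large sum indexed by set partitions and weighted by squared gcd's of minors of contractions, and the goal is to collapse this sum into the closed form \eqref{eqn: higher rank formula}. \textbf{The main obstacle is twofold}: first, \emph{guessing} the correct closed form, with its precise arrangement of signs, factorials and squared gcd's, since it is forced by the induction rather than predicted a priori from the rank one answer; and second, proving the matrix-theoretic lemma (the analogue of \Cref{lem: matrix lemma 2}) expressing sums of the form $\sum_{\mathcal{J}} G_{k \times k}(A_{\mathcal{J}})^2$ over refinements $\mathcal{J}$ of a fixed partition in terms of gcd's of minors of the original matrix. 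This identity is the linear-algebraic engine driving the collapse in the induction step, and I expect establishing it rigorously would account for the bulk of the technical work.
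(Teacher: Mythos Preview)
Your overall strategy matches the paper's: $\GL_r(\Z)$-invariance to reduce to a special matrix form, a Grothendieck-ring recursion generalising \Cref{thm: recursion in Grothendieck ring}, and lexicographic induction on $(r,n)$. The row convention (last row versus first row carrying the nonzero final entry) and the choice of base case ($r=0,1$ versus $n=1$ for arbitrary $r$) differ from the paper but are cosmetic.

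However, you misidentify the technical content of the induction step. The collapse of the inductive sum into the closed form \eqref{eqn: higher rank formula} does \emph{not} proceed via an identity for $\sum_{\mathcal J} G_{k\times k}(A_{\mathcal J})^2$ over refinements of a fixed partition. Instead, the paper partitions the set partitions of $[n{+}1]$ into \emph{lonely} ones (where $\{n{+}1\}$ is its own part) and \emph{friendly} ones, and matches terms bijectively: lonely partitions of $[n{+}1]$ of length $k{+}2$ correspond to partitions of $[n]$ of length $k{+}1$ and account for the first term of the recursion, while friendly partitions of $[n{+}1]$ are enumerated by pairs $(i,\Ical\vdash[n])$ via the identification $A(i)_\Ical = A_{\Ical_i}$, with the multiplicity $(\#I_{k+1}{-}1)$ exactly absorbing the change in factorial. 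The matrix lemma actually required (\Cref{lem: matrix lemma 2}) is far simpler than you anticipate: for an $r\times(k{+}1)$ double ramification matrix, each $k\times k$ minor depends up to sign only on which rows are selected, not on which column is deleted. It is invoked once, to show $G_{(k+1)\times(k+1)}(A_{\Ical'})=a_{n+1}\cdot G_{k\times k}(B_\Ical)$ for a lonely $\Ical'$. No sum-over-refinements identity is needed, and the induction step is considerably cleaner than you expect; the genuine difficulty, as the paper itself notes, lies in guessing the formula rather than in verifying it.
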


\begin{remark} If $n \leqslant r$, then there are no partitions $\Ical \vdash [n]$ of length $k+1$ for $n \leqslant k \leqslant r$. The associated terms in the above formula simply vanish.
\end{remark}

\subsection{Reduction via $\GL_r(\Z)$-invariance} \label{sec: reduction via GL} Given an $r \times n$ double ramification matrix $A$ and a matrix $M \in \GL_r(\Z)$, the product $MA$ is again an $r \times n$ double ramification matrix, since elementary row operations preserve this property. Clearly we have
\begin{equation} \label{eqn: GL invariance of DR locus} \DR_{1,n}^r(A) = \DR_{1,n}^r(MA) \end{equation}
as substacks of $\Mcal_{1,n}$. We will use this $\GL_r(\Z)$-invariance to reduce to a special class of double ramification matrices. The key fact is the following:

\begin{lemma} \label{lem: RHS invariant} The right-hand side of \eqref{eqn: higher rank formula} is $\GL_r(\Z)$-invariant.
\end{lemma}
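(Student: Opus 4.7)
The plan is to reduce the claim to the single fact that, for any integer matrix $B$, the quantity $G_{k\times k}(B)$ is invariant up to sign under left multiplication by elements of $\GL_r(\Z)$. Everything else on the right-hand side of \eqref{eqn: higher rank formula} is manifestly independent of $A$: the overall prefactor, the signs, factorials, summation range, and the combinatorics of partitions $\Ical \vdash [n]$. Only the factors $G_{k\times k}(A_\Ical)^2$ need to be tracked under $A \mapsto MA$.

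First I would observe that the contraction operation commutes with left multiplication: the columns of $MA$ are the $M$-images of the columns of $A$, so summing columns according to the parts of $\Ical$ yields
\[ (MA)_\Ical = M \cdot A_\Ical. \]
Hence it suffices to prove $G_{k\times k}(MB) = \pm\, G_{k\times k}(B)$ for every integer matrix $B$ of the relevant size and every $M \in \GL_r(\Z)$.

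For this I would apply the Cauchy--Binet formula. Any $k\times k$ minor of $MB$ obtained by selecting $k$ rows $I$ and $k$ columns $J$ expands as
\[ \det\bigl((MB)_{I,J}\bigr) = \sum_{\substack{K \subseteq [r] \\ |K|=k}} \det(M_{I,K})\, \det(B_{K,J}), \]
realising each $k\times k$ minor of $MB$ as an integer linear combination of $k\times k$ minors of $B$. This immediately gives $G_{k\times k}(B) \mid G_{k\times k}(MB)$. Running the same argument with $M^{-1} \in \GL_r(\Z)$ gives the opposite divisibility, and hence equality up to sign. Squaring removes the sign ambiguity and the invariance follows term by term.

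I do not anticipate any real obstacle: the argument is short and amounts to the classical fact that the $k$th determinantal divisor is a Smith-normal-form invariant. The only minor bookkeeping is to check that the boundary conventions $G_{0\times 0}(A_\Ical) = 1$ and $\gcd(\emptyset) = 0$ behave consistently (the $k=0$ term is trivially invariant, and the empty case only arises outside the range of summation), which is immediate.
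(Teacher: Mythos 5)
Your argument is correct and is essentially the paper's own: you note that contraction commutes with the $\GL_r(\Z)$ action and then deduce $G_{k\times k}(A_\Ical)=\pm G_{k\times k}(MA_\Ical)$ by divisibility in both directions. The only difference is that you make the paper's implicit step (each $k\times k$ minor of $MB$ is a $\Z$-linear combination of $k\times k$ minors of $B$) explicit via Cauchy--Binet, which is a harmless elaboration.
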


\begin{proof} Taking contractions commutes with the action of $\GL_r(\Z)$, that is
\[ (MA)_\Ical = M (A_\Ical) \]
for every $\Ical \vdash [n]$ and $M \in \GL_r(\Z)$. Each $k \times k$ minor of $M(A_\Ical)$ is a $\Z$-linear combination of $k \times k$ minors of $A_\Ical$. Therefore
\[ G_{k \times k}(A_\Ical) \mid G_{k \times k}(M (A_\Ical)).\]
But the same argument applied to $M^{-1}$ shows the reverse divisibility, so  in fact 
\[ G_{k \times k}(A_\Ical) = G_{k \times k}(M (A_\Ical)). \qedhere \]
\end{proof}

Consequently, to prove \Cref{thm: higher rank DR} for a double ramification matrix $A$ it is sufficient to prove it for $MA$ for a single $M \in \GL_r(\Z)$. We use the following reduction.

\begin{lemma} \label{lem: normal form for matrix} Given an $r \times n$ integer matrix $A$, there exists $M \in \GL_r(\Z)$ such that $MA$ takes the following special form:
\begin{equation} \label{eqn: special form for DR matrix}
\begingroup
\renewcommand*{\arraystretch}{1.3}
\begin{bmatrix}
a^{(1)}_1 & \cdots & a^{(1)}_{n-1} & a^{(1)}_n \\
a^{(2)}_1 & \cdots & a^{(2)}_{n-1} & 0 \\
& \vdots & & \vdots \\
a^{(r)}_1 & \cdots & a^{(r)}_{n-1} & 0
\end{bmatrix}.
\endgroup
\end{equation}
\end{lemma}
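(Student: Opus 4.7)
The plan is to reduce to a standard fact about the Hermite normal form of a single column vector: for any $v = (v_1, \ldots, v_r)^T \in \Z^r$ there exists $M \in \GL_r(\Z)$ with $Mv = (d, 0, \ldots, 0)^T$ where $d = \gcd(v_1, \ldots, v_r)$ (with $d = 0$ if $v = 0$). Granted this, applying such an $M$ to the full matrix $A$ clears all entries of the last column below the top, yielding precisely the form \eqref{eqn: special form for DR matrix}. The first $n-1$ columns are modified in the process, but their specific entries do not feature in the statement.

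The sub-fact is proved by the Euclidean algorithm realised through elementary row operations. First, I would observe that every elementary integer row operation of the following three types lies in $\GL_r(\Z)$: swapping two rows, negating a row, and adding an integer multiple of one row to another. Starting from the column $v$, I would repeatedly apply the following step: among the nonzero entries of the current column, pick two entries $v_i$ and $v_j$ with $|v_i| \leqslant |v_j|$, write $v_j = q v_i + s$ with $|s| < |v_i|$, and replace row $j$ by row $j$ minus $q$ times row $i$; this reduces the multiset of absolute values of the nonzero entries in the Euclidean sense. After finitely many steps only a single nonzero entry remains in the column. Finally I would swap it into the first position and, if needed, negate the row to obtain $(d, 0, \ldots, 0)^T$.

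There is essentially no obstacle here beyond citing (or recording) this standard Smith/Hermite normal form argument. The only thing to be slightly careful about is the degenerate case $v = 0$, in which case we can simply take $M = I_r$ and the matrix already has the required form with $a^{(1)}_n = 0$. I would conclude by noting that the resulting matrix $MA$ has all entries $0$ in its last column except possibly the top one, which is exactly \eqref{eqn: special form for DR matrix}. Combined with \Cref{lem: RHS invariant} and \eqref{eqn: GL invariance of DR locus}, this is the reduction that will be invoked in the proof of \Cref{thm: higher rank DR}.
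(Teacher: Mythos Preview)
Your proof is correct and takes essentially the same approach as the paper: both arguments reduce to clearing the last column via $\GL_r(\Z)$ row operations, with the paper writing out the $r=2$ case explicitly via B\'ezout's identity and then inducting on $r$, while you invoke the Euclidean-algorithm/Hermite-normal-form argument directly on the full column. These are two presentations of the same standard fact.
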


\begin{proof}
We prove the result for $r=2$; the general case proceeds by induction on the rows. We have:
\[ A = \begin{bmatrix} a_1 & \cdots & a_{n-1} & a_n \\ b_1 & \cdots & b_{n-1} & b_n \end{bmatrix}. \]
Take $d \colonequals \gcd(a_n,b_n)$ with an arbitrary choice of sign. There exist $p,q \in \Z$ with
\[ p a_n + q b_n = d. \]
Dividing through by $d$, we obtain $s,t \in \Z$ with
\begin{equation} \label{eqn: ps and qt} ps + qt = 1.\end{equation}
Consider the matrix
\[ M^\prime \colonequals \begin{bmatrix} p & q \\ -t & s \end{bmatrix} \]
which belongs to $\GL_2(\Z)$ by \eqref{eqn: ps and qt}. The matrix $M^\prime A$ takes the following form:
\[ \begin{bmatrix} \mbox{-----}\star\mbox{-----} & p a_n + q b_n \\ \mbox{-----}\star\mbox{-----} & -t a_n + s b_n \end{bmatrix} = \begin{bmatrix} \mbox{-----}\star\mbox{-----} & d \\ \mbox{-----}\star\mbox{-----} & -t a_n + s b_n \end{bmatrix}.\]
Since $d \mid a_n$ and $d \mid b_n$ we have $d \mid -t a_n + s b_n$.  Thus we may add an appropriate multiple of the first row to the second row to obtain a matrix of the form
\[ \begin{bmatrix} \mbox{-----}\star\mbox{-----} & d \\ \mbox{-----}\star\mbox{-----} & 0 \end{bmatrix}\]
as required.
\end{proof}

\begin{proposition} \label{prop: sufficient to prove for special matrices} To prove \Cref{thm: higher rank DR}, it is sufficient to prove it for matrices of the form \eqref{eqn: special form for DR matrix}.
\end{proposition}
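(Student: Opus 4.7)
The plan is to combine the three preceding ingredients: the $\GL_r(\Z)$-invariance of the left-hand side \eqref{eqn: GL invariance of DR locus}, the $\GL_r(\Z)$-invariance of the right-hand side (\Cref{lem: RHS invariant}), and the normal form (\Cref{lem: normal form for matrix}). The key point is that both sides of \eqref{eqn: higher rank formula} only depend on the row span of $A$ over $\Z$ (or more precisely, on the $\GL_r(\Z)$-orbit of $A$).

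First, given an arbitrary $r \times n$ double ramification matrix $A$, I would apply \Cref{lem: normal form for matrix} to produce $M \in \GL_r(\Z)$ with $MA$ in the special form \eqref{eqn: special form for DR matrix}. Note that $MA$ is still a double ramification matrix: row operations preserve the property that each row sums to zero, since a $\Z$-linear combination of row sums is the row sum of the same linear combination. Next, I would invoke \eqref{eqn: GL invariance of DR locus} to conclude that $\DR^r_{1,n}(A) = \DR^r_{1,n}(MA)$ as substacks, hence the two have the same orbifold Euler characteristic. Finally, I would invoke \Cref{lem: RHS invariant} to conclude that the right-hand side of \eqref{eqn: higher rank formula} takes the same value on $A$ and on $MA$.

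Putting these together, the identity \eqref{eqn: higher rank formula} holds for $A$ if and only if it holds for $MA$. Since $MA$ is of the special form \eqref{eqn: special form for DR matrix}, it is enough to establish \eqref{eqn: higher rank formula} under this additional hypothesis.

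There is no real obstacle here; the proposition is essentially a formal consequence of the two invariance statements plus the normal form. The only tiny subtlety worth flagging is that one should make sure \Cref{lem: normal form for matrix}, though stated for arbitrary integer matrices, produces a matrix that still has row sums zero; this is automatic because $\GL_r(\Z)$ acts by row operations. Once this proposition is in hand, the subsequent work can assume the last column of $A$ has the form $(a^{(1)}_n, 0, \ldots, 0)^\top$, which is exactly the shape that makes the Grothendieck-ring recursion of the next subsection tractable.
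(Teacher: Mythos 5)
Your argument is exactly the paper's: combine the $\GL_r(\Z)$-invariance of the substack \eqref{eqn: GL invariance of DR locus}, the $\GL_r(\Z)$-invariance of the right-hand side (\Cref{lem: RHS invariant}), and the normal form from \Cref{lem: normal form for matrix}. The small observation that row operations preserve the zero-row-sum condition is a correct and sensible sanity check, and the proof is correct.
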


\begin{proof} Combine \eqref{eqn: GL invariance of DR locus} with Lemmas~\ref{lem: RHS invariant} and \ref{lem: normal form for matrix}.\end{proof}

\subsection{Geometric recursion} \label{sec: higher rank recursion} Having reduced to matrices of the form \eqref{eqn: special form for DR matrix}, we now establish the following recursion generalising \Cref{thm: recursion in Grothendieck ring}:

\begin{theorem} \label{thm: recursion higher rank} Fix $n \geqslant 1$ and consider an $r \times (n+1)$ double ramification matrix $A$ of the special form \eqref{eqn: special form for DR matrix}, writing $a=(a_1,\ldots,a_n,a_{n+1})$ for the first row and $B$ for the $(r-1) \times n$ submatrix in the bottom left corner:
\[ A = 
\begin{bmatrix}  
a_1 & \cdots & a_n & a_{n+1} \\
	& & & 0 \\
	& B & & \vdots \\
	& & & 0
 \end{bmatrix}.
\]
For each $i \in [n]$ define the length $n$ ramification vector:
\[ a(i) \colonequals (a_1,\ldots,a_{i-1},a_i+a_{n+1},a_{i+1},\ldots,a_n).\]
Then the orbifold Euler characteristic of $\DR^r_{1,n+1}(A)$ satisfies the following recurrence:
\begin{equation} \label{eqn: recursion higher rank}
\chi_\orb( \DR_{1,n+1}^r(A) ) = a_{n+1}^2 \, \chi_\orb( \DR_{1,n}^{r-1}(B) ) - \sum_{i=1}^n \chi_\orb ( \DR_{1,n}^r \begin{bmatrix} \mbox{-----}a(i)\mbox{-----} \\ B \end{bmatrix} ). \end{equation}
\end{theorem}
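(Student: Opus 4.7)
The proof will mirror the argument for Theorem~\ref{thm: recursion in Grothendieck ring}, with the submatrix $B$ carried passively throughout. The key feature of the special form \eqref{eqn: special form for DR matrix} is that rows $2,\ldots,r$ of $A$ have zero in position $n+1$, so their ramification conditions do not involve $p_{n+1}$ and descend to the conditions defining $\DR_{1,n}^{r-1}(B)$. The case $a_{n+1}=0$ I would dispatch by the same fibre-of-the-forgetful-morphism argument used in the rank-one proof: every $a(i)$ reduces to $(a_1,\ldots,a_n)$ so the right-hand side collapses to $-n$ times a single rank-$r$ locus, and the recursion reduces to a statement about the universal curve minus $n$ sections.

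For $a_{n+1} \neq 0$ I consider the forgetful morphism
\[ \DR_{1,n+1}^r(A) \to \DR_{1,n}^{r-1}(B) \]
and stratify the target. For $I = \{i_1,\ldots,i_k\} \subseteq [n]$ the depth-$k$ closed stratum is
\[ \bigcap_{j=1}^k \DR_{1,n}^r \begin{bmatrix} \mbox{-----}a(i_j)\mbox{-----} \\ B \end{bmatrix} \subseteq \DR_{1,n}^{r-1}(B), \]
with locally closed stratum obtained by removing the deeper closed strata. Pulling back along the forgetful morphism yields a stratification of the source.

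The key geometric step is to show that on each depth-$k$ open stratum the forgetful morphism is \'etale of degree $a_{n+1}^2 - k$. Given $(C,p_1,\ldots,p_n)$ in such a stratum, a lift is a point $p_{n+1} \in C \setminus \{p_1,\ldots,p_n\}$ satisfying $\OO_C(a_{n+1} p_{n+1}) \cong \OO_C(-\Sigma_{i=1}^n a_i p_i)$; the $B$-conditions are inherited automatically from the base since they do not involve $p_{n+1}$. There are $a_{n+1}^2$ solutions on $C$; for $j \in I$ the base point lies in $\DR_{1,n}(a(j))$ so $p_{n+1}=p_j$ solves the equation and must be excluded, while for $j \notin I$ the openness condition guarantees $p_{n+1}=p_j$ is not a solution. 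Exactly $k$ choices are therefore removed; when $a_{n+1}^2 - k < 0$ both source and target strata are empty by the argument used in the rank-one proof.

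Finally I would apply scissor relations. Each depth-$k$ locally closed stratum of the source contributes $(a_{n+1}^2 - k)$ copies of its image to the class of $\DR_{1,n+1}^r(A)$, and the combinatorial reassembly---identical to the rank-one case because a stratum indexed by $|I|=k$ participates in exactly $k$ of the subtracted terms on the right-hand side---yields the stated identity. The main thing to verify is that the $B$-conditions neither introduce additional strata nor interfere with the \'etale degree calculation; once this is in place the rank-one bookkeeping transfers verbatim.
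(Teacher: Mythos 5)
Your proof is correct and takes essentially the same approach as the paper, which simply states that the rank-one argument applies \emph{mutatis mutandis} via the forgetful morphism $\DR_{1,n+1}^r(A) \to \DR_{1,n}^{r-1}(B)$. You have filled in exactly the details the paper leaves implicit—in particular, that the special form of $A$ ensures the $B$-conditions do not involve $p_{n+1}$, so the stratification, \'etale degree count, and scissor-relation bookkeeping carry over without change.
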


\begin{proof} The proof of \Cref{thm: recursion in Grothendieck ring} applies \emph{mutatis mutandis}. The analogue of the forgetful morphism $\DR_{1,n+1}(a) \to \Mcal_{1,n}$ is
\[ \DR_{1,n+1}^r(A) \to \DR_	{1,n}^{r-1}(B) \]
and the construction of the stratification is identical.
\end{proof}

\subsection{Matrix lemmas} \label{sec: matrix lemmas} The proof will proceed by induction, using \Cref{prop: sufficient to prove for special matrices} and \Cref{thm: recursion higher rank}. We require a basic result on the linear algebra of double ramification matrices.

\begin{lemma} \label{lem: matrix lemma 1} Consider an $r \times (r+1)$ double ramification matrix $A$. Then all the $r \times r$ minors of $A$ coincide up to sign.
\end{lemma}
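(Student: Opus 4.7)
The plan rests on the single linear relation among the columns of $A$ forced by the double ramification condition. Let $c_1, \ldots, c_{r+1} \in \Z^r$ denote the columns of $A$. The hypothesis that every row of $A$ sums to zero is equivalent to the identity
\[ c_1 + c_2 + \cdots + c_{r+1} = 0 \]
in $\Z^r$. For each $j \in [r+1]$, write $M_j$ for the $r \times r$ minor of $A$ obtained by deleting column $j$ and keeping the remaining columns in their original order. I will compare $M_j$ with $M_{r+1}$ directly by elementary column operations.

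Starting from $M_j = \det[c_1, \ldots, c_{j-1}, c_{j+1}, \ldots, c_{r+1}]$, I add all of the other columns to the last column $c_{r+1}$; column operations preserve the determinant, and by the relation above the new last column is $-c_j$. After extracting the sign and moving $c_j$ back to position $j$ via $r-j$ adjacent transpositions, the outcome is
\[ M_j = (-1)^{r-j+1} M_{r+1}, \]
which shows that all the $r \times r$ minors of $A$ coincide up to sign.

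There is no real obstacle here---the argument is bookkeeping of signs under column operations. Equivalently, one can invoke the classical cofactor identity that the vector $((-1)^{j-1} M_j)_{j=1}^{r+1}$ lies in $\ker A$: the row-sum hypothesis places $(1, \ldots, 1)^\top$ in this kernel, and when $A$ has maximal rank $r$ that vector spans the kernel, so each $M_j$ agrees with $M_1$ up to the sign $(-1)^{j-1}$. The rank-deficient case is immediate as all minors then vanish.
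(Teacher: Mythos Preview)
Your proof is correct and follows essentially the same approach as the paper: both arguments exploit the single linear relation $c_1+\cdots+c_{r+1}=0$ among the columns together with the behaviour of the determinant under column operations. The only cosmetic difference is that the paper compares two minors $M_i,M_j$ directly by summing columns $i$ and $j$ (obtaining an $r\times r$ double ramification matrix with vanishing determinant and then splitting by multilinearity), whereas you compare each $M_j$ to the fixed minor $M_{r+1}$ by adding all remaining columns to the last one.
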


\begin{proof}
Consider such a double ramification matrix:
\[ A = 
\begin{bmatrix}
a^{(1)}_1 & \cdots & a^{(1)}_{r+1} \\
 & \vdots &  \\
a^{(r)}_1 & \cdots & a^{(r)}_{r+1}
\end{bmatrix}.
\]
An $r \times r$ minor is obtained by removing a single column. Given indices $i, j \in [r+1]$ with $i < j$ we consider the contraction $B$ of $A$ obtained by summing the $i$th and $j$th columns:
\[
B \colonequals 
\! \left[
\begin{array}{cccccccc}
\!\!\! \vline & & \vline & & \vline & & \vline & \vline \!\!\!\!  \\
\!\!\! a_1 & \cdots & \widehat{a_i} & \cdots & \widehat{a_j} & \cdots & a_r & a_i\!+\!a_j \!\!\!\! \\
\!\!\! \vline & & \vline & & \vline & & \vline & \vline \!\!\!\! 
\end{array}
\right].
\]
This is an $r \times r$ double ramification matrix, so the sum of the columns is equal to the zero vector and so $\det B=0$. But on the other hand by multilinearity of the determinant $\det B$ is equal to:
\[ \det \! \left[
\begin{array}{cccccccc}
\!\! \vline & & \vline & & \vline & & \vline & \vline \!\!\! \\
\!\! a_1 & \cdots & \widehat{a_i} & \cdots & \widehat{a_j} & \cdots & a_r & a_i \!\!\! \\
\!\! \vline & & \vline & & \vline & & \vline & \vline \!\!\! 
\end{array}
\right]
+
\det \! \left[
\begin{array}{cccccccc}
\!\! \vline & & \vline & & \vline & & \vline & \vline \!\!\!  \\
\!\! a_1 & \cdots & \widehat{a_i} & \cdots & \widehat{a_j} & \cdots & a_r & a_j \!\!\! \\
\!\! \vline & & \vline & & \vline & & \vline & \vline \!\!\! 
\end{array}
\right].
\]
Up to signs determined by the appropriate column permutations, these two terms are the $r \times r$ minors of $A$ corresponding to $i$ and $j$.
\end{proof}

\begin{corollary} \label{lem: matrix lemma 2} For $k \in \{0,\ldots,r\}$ consider an $r \times (k+1)$ double ramification matrix $A$, so that the $k \times k$ minors of $A$ are obtained by deleting $r-k$ rows and $1$ column. Then up to sign, the $k \times k$ minor depends only on the choice of rows and not on the choice of column.
\end{corollary}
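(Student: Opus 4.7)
The plan is to reduce immediately to the previous lemma. Fix a choice of $k$ rows of $A$, indexed by some subset $R \subseteq [r]$ with $\# R = k$, and let $A_R$ denote the $k \times (k+1)$ submatrix of $A$ obtained by retaining only these rows. The $k \times k$ minors of $A$ that use precisely the rows in $R$ are, tautologically, the $k \times k$ minors of $A_R$.

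The key observation is that $A_R$ is itself a $k \times (k+1)$ double ramification matrix: the row-sum-zero condition is inherited from $A$ since we have only deleted rows. Therefore \Cref{lem: matrix lemma 1} applies to $A_R$, and we conclude that all $k \times k$ minors of $A_R$ coincide up to sign. Since the choice of column deleted from $A$ corresponds exactly to the choice of column deleted from $A_R$, this precisely says that among the $k \times k$ minors of $A$ built from the rows in $R$, the answer is independent of the column choice up to sign.

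No step here is genuinely hard; the only thing to be careful about is making sure the reduction to a $k \times (k+1)$ double ramification matrix is legitimate, which it is because deleting rows of a double ramification matrix preserves the defining property (row sums equal to zero), whereas deleting columns does not. The corollary then follows in a single line from \Cref{lem: matrix lemma 1}.
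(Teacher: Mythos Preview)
Your proof is correct and follows exactly the same approach as the paper: delete $r-k$ rows to obtain a $k \times (k+1)$ double ramification matrix, then apply \Cref{lem: matrix lemma 1}. The paper compresses this into a single sentence, but the content is identical.
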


\begin{proof} This follows immediately from \Cref{lem: matrix lemma 1}: deleting $r-k$ rows produces a $k \times (k+1)$ double ramification matrix.	
\end{proof}

\subsection{Proof} \label{sec: higher rank proof}

\begin{proof}[Proof~of~\Cref{thm: higher rank DR}]
We induct on the pair $(r,n)$ using the lexicographic order. Given $(r,n)$ we assume that the formula has already been established for pairs $(r^\prime,n^\prime)$ such that either:
\begin{enumerate}
\item $r^\prime < r$; or
\item $r^\prime=r$ and $n^\prime<n$.	
\end{enumerate}
The base case is when $r$ is arbitrary and $n=1$. Then $A$ is a column vector consisting of $r$ zeros, so $\DR_{1,1}^r(A) = \Mcal_{1,1}$. In this case by the Harer--Zagier formula:
\[ \chi_\orb(\DR_{1,1}^r(A)) = -1/12.\]
On the other hand in the formula \eqref{eqn: higher rank formula} there is a single partition $\Ical \vdash [1]$ and by convention we have $G_{0 \times 0}(A_\Ical)=1$. The total contribution is $-1/12$, verifying the base case.

For the induction step, consider an $r \times(n+1)$ double ramification matrix $A$. By \Cref{prop: sufficient to prove for special matrices} we may assume $A$ takes the following form
\begin{equation} \label{eqn: matrix A special form}
A=
\begin{bmatrix}  
a_1 & \cdots & a_n & a_{n+1} \\
	& & & 0 \\
	& B & & \vdots \\
	& & & 0
 \end{bmatrix}.
\end{equation}
and then \Cref{thm: recursion higher rank} gives:
\begin{equation} \label{eqn: proof general case recursion of Euler chars}
\chi_\orb(\DR_{1,n+1}^r(A)) = a_{n+1}^2 \, \chi_\orb(\DR_{1,n}^{r-1}(B)) - \sum_{i=1}^n \chi_\orb(\DR_{1,n}^r \! \begin{bmatrix} \mbox{-----}a(i)\mbox{-----} \\ B \end{bmatrix}).
\end{equation}
We apply the induction hypothesis to the right-hand side. The following definition will be useful. A partition $\Ical \vdash [n\!+\!1]$ is \textbf{lonely} if $n\!+\!1$ constitutes an entire part, and \textbf{friendly} otherwise. In the right-hand side above, the first term will provide the contributions of the lonely partitions, while the second term will provide the contributions of the friendly partitions.

We begin with $\chi_\orb(\DR_{1,n}^{r-1}(B))$. The contributions are indexed by partitions $\Ical = \{ I_1,\ldots,I_{k+1} \} \vdash [n]$ of length $k\!+\!1$ for $k=0,\ldots,r-1$. These correspond bijectively with lonely partitions 
\[ \Ical^\prime \colonequals \{ I_1,\ldots,I_{k+1},\{n\!+\!1\} \} \vdash [n\!+\!1] \]
of length $k\!+\!2$. Ranging over all $k$, we obtain all lonely partitions $\Ical \vdash [n\!+\!1]$ of length $k+1$ for $k=1,\ldots,r$ (and since there are no lonely partitions of length $1$, we may in fact say for $k=0,\ldots,r$).

Given such an $\Ical \vdash [n]$ and corresponding lonely partition $\Ical^\prime \vdash [n\!+\!1]$, the associated contractions are related as follows:
\[
A_{\Ical^\prime} = 
\begin{bmatrix} a_{I_1} & \cdots & a_{I_{k+1}} & a_{n+1} \\
& & & 0 \\
& B_\Ical & & \vdots \\
& & & 0 	
 \end{bmatrix}.
\]
We must now compare the $k \times k$ minors of $B_\Ical$ with the $(k+1) \times (k+1)$ minors of $A_{\Ical^\prime}$.

The $(k+1) \times (k+1)$ minors of $A_{\Ical^\prime}$ are obtained by selecting $(k+1)$ rows and $(k+1)$ columns, but up to sign the choice of columns does not matter by \Cref{lem: matrix lemma 2}. If the first row is not among the $(k+1)$ selected rows, then we may include the final column among the $(k+1)$ selected columns: the resulting submatrix has a column of zeros, and hence the minor vanishes.

To obtain a nonzero minor of $A_{\Ical^\prime}$ we must therefore include the first row among the $(k+1)$ selected rows. This amounts to choosing $k$ rows of $B_\Ical$. Once this is done, we can make an arbitrary choice of $(k+1)$ columns of $A_{\Ical^\prime}$ by \Cref{lem: matrix lemma 2}. We choose $k$ columns of $B_\Ical$ together with the final column of $A_{\Ical^\prime}$.

In this way we obtain a bijection between the nonzero $(k+1) \times (k+1)$ minors of $A_{\Ical^\prime}$ and the $k \times k$ minors of $B_\Ical$. Expanding along the final column we see that these are related, up to sign, by the factor $a_{n+1}$. We conclude:
\[ G_{k+1 \times k+1}(A_{\Ical^\prime}) = a_{n+1} G_{k \times k}(B_\Ical). \]
Examining the first term on the right-hand side of \eqref{eqn: proof general case recursion of Euler chars} we obtain precisely the lonely contributions:
\begin{align} \nonumber a_{n+1}^2 \, \chi_\orb(\DR_{1,n}^{r-1}(B)) & = \dfrac{(-1)^n}{12} \sum_{k=0}^{r-1} \sum_{\substack{\Ical \vdash [n] \\ \ell(\Ical)=k+1}} (-1)^k (\# I_1-1)! \cdots (\# I_{k+1}-1)! \cdot a_{n+1}^2 G_{k \times k}(B_\Ical)^2 \\[0.2cm]
\nonumber & = \dfrac{(-1)^n}{12} \sum_{k=0}^r \sum_{\substack{\Ical \vdash [n + 1] \\ \ell(\Ical)=k+1 \\ \Ical \text{ lonely}}} (-1)^{k-1} (\# I_1-1)! \cdots (\# I_{k+1}-1)! \cdot G_{k \times k}(A_\Ical)^2 \\[0.2cm]
\label{eqn: general proof 1st contribution} & = \dfrac{(-1)^{n+1}}{12} \sum_{k=0}^r \sum_{\substack{\Ical \vdash [n + 1] \\ \ell(\Ical)=k+1 \\ \Ical \text{ lonely}}} (-1)^k (\# I_1-1)! \cdots (\# I_{k+1}-1)! \cdot G_{k \times k}(A_\Ical)^2.
\end{align}

We now turn to the second term on the right-hand side of \eqref{eqn: proof general case recursion of Euler chars}. Given $i \in [n]$ write $A(i)$ for the matrix:
\[ A(i) \colonequals \begin{bmatrix} \mbox{-----}a(i)\mbox{-----} \\ B \end{bmatrix}. \]
The contributions to $\chi_\orb(\DR_{1,n}^r(A(i)))$ are indexed by partitions $\Ical \vdash [n]$ of length $k+1$ for $k=0,\ldots,r$. For each such partition, the associated contraction satisfies:
\[ A(i)_\Ical = A_{\Ical_i} \]
where $\Ical_i \vdash [n\!+\!1]$ is the partition obtained by appending $n\!+\!1$ to the part of $\Ical$ containing $i \in [n]$. Note that $\Ical$ and $\Ical_i$ have the same length. In this way, we enumerate all the friendly partitions of $[n\!+\!1]$, and each such partition appears $(\# I_{k+1}\!-\!1)$ times, where without loss of generality $I_{k+1}$ is the part containing $n\!+\!1$. We conclude:
\begin{align}
\nonumber - \sum_{i=1}^n \chi_\orb(\DR_{1,n}^r(A(i))) & = - \sum_{i=1}^n \dfrac{(-1)^n}{12} \sum_{k=0}^r \sum_{\substack{\Ical \vdash [n] \\ \ell(\Ical)=k+1}} (-1)^k (\# I_1 - 1)! \cdots (\# I_{k+1} - 1)! \cdot G_{k \times k}(A(i)_\Ical)^2 \\[0.2cm]
\label{eqn: general proof 2nd contribution} & = \dfrac{(-1)^{n+1}}{12} \sum_{k=0}^r \sum_{\substack{\Ical \vdash [n+1] \\ \ell(\Ical)=k+1 \\ \Ical \text{ friendly}}} (-1)^k (\# I_1 - 1)! \cdots (\# I_{k+1} - 1)! \cdot G_{k \times k}(A_\Ical)^2.
\end{align}
Combining \eqref{eqn: general proof 1st contribution} and \eqref{eqn: general proof 2nd contribution} we obtain precisely the desired formula for $\chi_\orb(\DR_{1,n+1}^r(A))$. This completes the induction step.
\end{proof}

\subsection{Simplifying the leading term} \label{sec: leading term} We refer to the $k=r$ term in \eqref{eqn: higher rank formula} as the \textbf{leading term}:
\begin{equation} \label{eqn: leading term old} \dfrac{(-1)^{n+r}}{12} \sum_{\substack{\Ical \, \vdash [n] \\ \ell(\Ical)=r+1}} (\# I_1-1)! \cdots (\# I_{r+1}-1)! \cdot G_{r \times r}(A_\Ical)^2. \end{equation}
We now simplify the leading term, expressing it in terms of minors of the original matrix $A$ rather than its contractions $A_\Ical$.

Fix an $r \times n$ double ramification matrix $A$ and assume $n \geqslant r+1$ (otherwise the leading term vanishes). The $r \times r$ minors arise by selecting $r$ columns of $A$. Given a subset $I \subseteq [n]$ of size $r$ we let
\[ M_I(A) \]
denote the associated $r \times r$ minor.

\begin{proposition} \label{prop: leading term} The leading term \eqref{eqn: leading term old} is equal to:
\begin{equation} \label{eqn: leading term new} 
L_{1,n}^r(A) \colonequals \dfrac{(-1)^{n+r}}{12} \dfrac{(n-1)!}{(r+1)!} \sum_{I \in {[n] \choose r}} M_I(A)^2.
\end{equation}
\end{proposition}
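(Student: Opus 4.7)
The identity to prove is
\[
\sum_{\substack{\Ical \vdash [n] \\ \ell(\Ical)=r+1}} \prod_{j=1}^{r+1}(|I_j|-1)! \cdot G_{r \times r}(A_\Ical)^2 = \dfrac{(n-1)!}{(r+1)!} \sum_{I \in \binom{[n]}{r}} M_I(A)^2.
\]
My plan is to expand the left-hand side using multilinearity of the determinant, swap the order of summation, and collapse the resulting combinatorial sum by exploiting the column relation $\sum_{i=1}^n \alpha_i = 0$, where $\alpha_i$ denotes the $i$-th column of $A$.

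By \Cref{lem: matrix lemma 2}, all $r \times r$ minors of the $r \times (r+1)$ matrix $A_\Ical$ coincide up to sign, so after fixing an ordering $I_1, \ldots, I_{r+1}$ of the parts one has $G_{r \times r}(A_\Ical)^2 = \det[c_1|\cdots|c_r]^2$ with $c_j \colonequals \sum_{i \in I_j} \alpha_i$. Multilinearity expands this as $\sum_{\mathbf{i}, \mathbf{j}} M(\mathbf{i}) M(\mathbf{j})$, where $\mathbf{i} = (i_1, \ldots, i_r)$ and $\mathbf{j} = (j_1, \ldots, j_r)$ range over tuples with $i_k, j_k \in I_k$ and $M(\mathbf{i}) \colonequals \det[\alpha_{i_1}|\cdots|\alpha_{i_r}] = \pm M_I(A)$ for $I = \{i_1, \ldots, i_r\}$.

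Substituting, passing to ordered partitions (and dividing by $(r+1)!$), and swapping the order of summation collects, for each compatible pair $(\mathbf{i}, \mathbf{j})$---meaning the pairs $\{i_k, j_k\}$ are pairwise disjoint across $k$---a contribution $M(\mathbf{i}) M(\mathbf{j}) \cdot W(\mathbf{i}, \mathbf{j})$, where $W$ counts ordered partitions of $[n]$ with $\{i_k, j_k\} \subseteq I_k$ for $k \leqslant r$, weighted by $\prod(|I_j|-1)!$. A direct generating function computation, using the identities $\sum_{m \geqslant 0} \binom{s+m-1}{m} x^m = (1-x)^{-s}$ and $\sum_{m \geqslant 1} x^m/m = -\log(1-x)$ to distribute the remaining $n - |\bigsqcup_k \{i_k, j_k\}|$ elements, yields an explicit closed form for $W(\mathbf{i}, \mathbf{j})$ depending only on $n$ and on $a \colonequals \#\{k : i_k = j_k\}$.

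Finally I would group contributions by the underlying unordered $r$-subsets $I = \{i_1, \ldots, i_r\}$ and $J = \{j_1, \ldots, j_r\}$: the diagonal ($I = J$, corresponding to $a = r$) contributions should assemble into the required multiple $(n-1)!/(r+1)!$ of $\sum_I M_I(A)^2$, while the off-diagonal ($I \neq J$, corresponding to $a < r$) cross terms should vanish after invoking the linear dependencies $\sum_{i \in [n]} \det[\alpha_{i_1}|\cdots|\alpha_{i_{r-1}}|\alpha_i] = 0$ among the minors (an immediate consequence of $\sum_i \alpha_i = 0$). The hard part will be the combinatorial bookkeeping in these last two steps, particularly verifying the off-diagonal cancellation; the $r = 1$ calculation in \Cref{sec: rank 1} serves as a reassuring sanity check for the strategy.
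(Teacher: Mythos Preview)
Your route is different from the paper's, which bypasses all of the partition combinatorics by re-running the induction used for \Cref{thm: higher rank DR}: one checks that $L_{1,n}^r(A)$ is $\GL_r(\Z)$-invariant, reduces to matrices of the special form \eqref{eqn: special form for DR matrix}, and verifies directly that $L$ satisfies the leading-term part of the recursion \eqref{eqn: recursion higher rank}. Since the old leading term was already shown, inside the proof of \Cref{thm: higher rank DR}, to obey that same recursion, the two expressions agree by induction on $(r,n)$.

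Your expansion is sound through the computation of the weights $W(\mathbf{i},\mathbf{j})$, but the final split is wrong as stated: the diagonal ($a=r$) terms alone do \emph{not} assemble into $\tfrac{(n-1)!}{(r+1)!}\sum_I M_I(A)^2$, and the off-diagonal ($a<r$) terms do \emph{not} vanish. Already for $r=1$ one computes $W_{\mathrm{diag}}=(n-1)!\,H_{n-1}$ and $W_{\mathrm{off}}=(n-2)!\bigl((n-1)H_{n-2}-(n-2)\bigr)$, with $H_m$ the $m$th harmonic number; neither equals $(n-1)!$. What actually happens is that the column relation converts $\sum_{i\neq j}a_ia_j$ into $-\sum_i a_i^2$, so the off-diagonal contribution feeds back into the diagonal, and it is the \emph{difference} $W_{\mathrm{diag}}-W_{\mathrm{off}}$ that telescopes to $(n-1)!$. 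For general $r$ you would have to iterate this: use the minor relations to push each $a<r$ stratum up to $a=r$, and then show that the resulting signed combination of $W_0,\ldots,W_r$ collapses to $(n-1)!$. This is feasible but substantially messier than the paper's inductive argument, and your own $r=1$ sanity check, carried through, would have exposed the issue.
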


\begin{proof}

We repeat the proof of \Cref{thm: higher rank DR}, replacing the old leading term \eqref{eqn: leading term old} by the new leading term \eqref{eqn: leading term new} in the formula. We adopt the same notation as before.

The base of the induction is straightforward. For the induction step, we note that $L_{1,n+1}^r(A)$ is $\GL_r(\Z)$-invariant, so we may reduce to matrices of the form \eqref{eqn: special form for DR matrix} and apply \Cref{thm: recursion higher rank} to obtain \eqref{eqn: proof general case recursion of Euler chars}. We saw in the proof of \Cref{thm: higher rank DR} that the leading terms on both sides of \eqref{eqn: proof general case recursion of Euler chars} are identified, hence we can focus exclusively on these. It remains to prove:
\begin{equation} \label{eqn: simplify leading term recursion} L_{1,n+1}^r(A) = a_{n+1}^2 \cdot L_{1,n}^{r-1}(B) - \sum_{i=1}^n L_{1,n}^r(A(i)). \end{equation}
The rest of the argument consists of algebraic manipulations. We say a subset
\[ I \in {[n\!+\!1] \choose r} \]
is \textbf{nouveau} if $n+1 \in I$ and \textbf{ancien} if $n+1 \not\in I$. In the nouveau case, we have
\[ 
M_I(A) = a_{n+1} \cdot M_{I \setminus \{n+1\}}(B).\smallskip
\]
Thus the first term on the right-hand side of \eqref{eqn: simplify leading term recursion} is equal to a sum of nouveau contributions, which we can write as:
\begin{equation} \label{eqn: simplification contribution 1} \dfrac{(-1)^{n+1+r}}{12} \dfrac{(n-1)!}{(r+1)!} (r+1) \sum_{\substack{I \in {[n+1] \choose r} \\ n+1 \in I}} M_I(A)^2. \end{equation}
The second term contains both nouveau and ancien contributions which we now disentangle. We can write:
\begin{equation} \label{eqn: simplification proof second term of RHS}
-\sum_{i=1}^n L_{1,n}^r(A(i))=\dfrac{(-1)^{n+1+r}}{12} \dfrac{(n-1)!}{(r+1)!} \sum_{i=1}^n \left(\sum_{\substack{I \in {[n] \choose r}\\ i \not\in I}} M_I(A(i))^2+\sum_{\substack{I \in {[n] \choose r}\\ i \in I}} M_I(A(i))^2 \right).
\end{equation}
For $i \in [n]$ and $I \in {[n] \choose r}$ we have
\[ 
M_I(A(i)) = \begin{cases} M_I(A) \qquad \qquad \qquad \qquad \qquad \qquad \qquad \ \text{if $i \not\in I$,} \\ M_I(A) + (-1)^{s(I,i)} M_{I \setminus \{i\} \cup \{n+1\}}(A) \qquad \text{if $i \in I$,} \end{cases}
\]
where $s(I,i) \colonequals \# \{ j \in I \colon j > i \}$. Then the sum in \eqref{eqn: simplification proof second term of RHS} over $I \not\ni i$ produces the following ancien contributions:
\begin{equation} \label{eqn: simplification contribution 2} \dfrac{(-1)^{n+1+r}}{12} \dfrac{(n-1)!}{(r+1)!} (n-r) \sum_{\substack{I \in {[n+1] \choose r} \\ n+1 \not\in I}} M_I(A)^2.\end{equation}
We now turn to the sum in \eqref{eqn: simplification proof second term of RHS} over $I \ni i$. Squaring $M_I(A(i))$ produces the mixed term
\[ 2 (-1)^{s(I,i)} \cdot M_I(A)\cdot M_{I \setminus \{i\} \cup \{n+1\}}(A).\]
Summing over $i \in [n]$ and setting $I^\prime \colonequals I\setminus\{i\}$ we can rewrite the sum of the mixed terms as follows
\begin{align*}
\sum_{i=1}^n\sum_{\substack{I \in {[n] \choose r}\\ i\in I}} 2(-1)^{s(I,i)} \cdot M_I(A) \cdot M_{I \setminus \{i\} \cup \{n+1\}}(A) & = \sum_{I^\prime \in{[n]\choose r-1}}\sum_{i\in[n]\setminus I^\prime} 2(-1)^{s(I,i)} \cdot M_{I^\prime \cup \{i\}}(A) \cdot M_{I^\prime \cup \{n+1\}}(A) \\[0.2cm]
& = 2 \sum_{I^\prime \in{[n]\choose r-1}} M_{I^\prime \cup \{n+1\}}(A) \sum_{i\in[n]\setminus I^\prime} (-1)^{s(I,i)} \cdot M_{I^\prime \cup \{i\}}(A) \\[0.2cm]
 &=-2\sum_{I^\prime \in{[n]\choose r-1}}M_{I^\prime \cup \{n+1\}}(A)^2 
\end{align*}
where the final equality follows from a basic property of double ramification matrices, similar to \Cref{lem: matrix lemma 2}. On the other hand, the square of $M_{I^\prime \cup \{n+1\}}(A)$ also appears in the summation once for every $i\in[n]\setminus I^\prime$, of which there are $n-(r-1)$. Assembling, we obtain
\begin{align*} \sum_{i=1}^n \sum_{\substack{I \in {[n] \choose r} \\ i \in I}} M_I(A(i))^2 & = \bigg( \sum_{i=1}^n \sum_{\substack{I \in {[n] \choose r} \\ i \in I}} M_I(A)^2 \bigg) + (-2+(n-(r-1)))\sum_{\substack{I \in {[n+1]\choose r} \\ n+1 \in I}} M_I(A)^2 \\[0.2cm]
& = r \sum_{\substack{I \in {[n+1] \choose r} \\ n+1 \not\in I}} M_I(A)^2 + (n-r-1)\sum_{\substack{I \in {[n+1]\choose r} \\ n+1 \in I}} M_I(A)^2
\end{align*}
so that the contribution is:
\begin{equation} \label{eqn: simplification contribution 3} \dfrac{(-1)^{n+1+r}}{12} \dfrac{(n-1)!}{(r+1)!} \bigg( r \sum_{\substack{I \in {[n+1] \choose r} \\ n+1 \not\in I}} M_I(A)^2 + (n-r-1)\sum_{\substack{I \in {[n+1]\choose r} \\ n+1 \in I}} M_I(A)^2 \bigg). \end{equation}
Combining \eqref{eqn: simplification contribution 1}, \eqref{eqn: simplification contribution 2}, and \eqref{eqn: simplification contribution 3} we obtain the desired identity \eqref{eqn: simplify leading term recursion}.
\end{proof}

\subsection{Comparing Theorems~\ref{thm: Euler char rank one}~and~\ref{thm: higher rank DR}}  \label{sec: comparing theorems X and Y} When $r=1$ we can directly match the formula appearing in \Cref{thm: higher rank DR} with the considerably simpler formula appearing in \Cref{thm: Euler char rank one}. We require the following:

\begin{lemma} Fix $a=(a_1,\ldots,a_n) \in \Z^n$ with $\Sigma_{i=1}^n a_i =0$. For each $m \in \{1,\ldots,n-1\}$ we have
\[ \sum_{I \in {[n] \choose m}} a_I^2 = {n-2 \choose m-1} \sum_{i=1}^n a_i^2 \]
where $a_I \colonequals \sum_{i \in I} a_i$.
\end{lemma}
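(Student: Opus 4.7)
The plan is a direct expansion: write $a_I^2 = \big(\sum_{i\in I} a_i\big)^2 = \sum_{i \in I} a_i^2 + 2\sum_{\{i,j\} \subseteq I} a_i a_j$, then interchange the order of summation and count how many $I \in \binom{[n]}{m}$ contain a fixed element or fixed pair. Each $i$ lies in $\binom{n-1}{m-1}$ such subsets, and each pair $\{i,j\}$ lies in $\binom{n-2}{m-2}$ such subsets, so
\[ \sum_{I \in {[n] \choose m}} a_I^2 = \binom{n-1}{m-1} \sum_{i=1}^n a_i^2 + 2 \binom{n-2}{m-2} \sum_{\{i,j\}} a_i a_j. \]

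The key step is to eliminate the cross terms using the hypothesis $\sum_i a_i = 0$: squaring this identity yields $2\sum_{\{i,j\}} a_i a_j = -\sum_i a_i^2$. Substituting gives
\[ \sum_{I \in {[n] \choose m}} a_I^2 = \left( \binom{n-1}{m-1} - \binom{n-2}{m-2} \right) \sum_{i=1}^n a_i^2, \]
and Pascal's rule $\binom{n-1}{m-1} = \binom{n-2}{m-1} + \binom{n-2}{m-2}$ collapses the coefficient to $\binom{n-2}{m-1}$, as required.

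There is essentially no obstacle: the argument is two lines of double counting followed by one application of the zero-sum hypothesis and one application of Pascal's rule. The only thing to verify is that the range $m \in \{1,\ldots,n-1\}$ causes no degeneracies in the binomial coefficients $\binom{n-2}{m-2}$ and $\binom{n-2}{m-1}$, which one may check at the endpoints $m=1$ and $m=n-1$ directly (in the latter case $a_I = -a_{[n] \setminus I}$ gives the identity immediately).
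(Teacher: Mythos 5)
Your proof is correct, and it takes a genuinely different route from the paper's. You expand $a_I^2 = \sum_{i\in I} a_i^2 + 2\sum_{\{i,j\}\subseteq I} a_i a_j$, double-count to get $\binom{n-1}{m-1}\sum_i a_i^2 + 2\binom{n-2}{m-2}\sum_{\{i,j\}} a_i a_j$, eliminate the cross terms via $0 = (\sum_i a_i)^2$, and finish with Pascal's rule. The paper instead invokes symmetric function theory: the left-hand side is a homogeneous quadratic symmetric polynomial, hence a $\Q$-linear combination $\lambda_1 p_1^2 + \lambda_2 p_2$ of power sums; the $p_1^2$ term dies by the zero-sum hypothesis, and $\lambda_2$ is pinned down by evaluating at the single test vector $a=(1,-1,0,\ldots,0)$, where both sides are easy to compute. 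Your approach is entirely elementary and makes the coefficient emerge constructively from Pascal's identity; the paper's is shorter once one is comfortable appealing to the structure of the ring of symmetric polynomials, and it sidesteps any bookkeeping of binomial coefficients at the cost of a nonconstructive ``it must be of this form'' step. Both are clean; your endpoint checks at $m=1$ and $m=n-1$ are a nice touch but not strictly necessary, since the standard convention $\binom{n-2}{-1}=0$ makes the identity hold uniformly.
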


\begin{proof} The left-hand side is a homogeneous quadratic symmetric polynomial in $a_1,\ldots,a_n$ and hence can be written in terms of power sums \cite[I (2.12)]{Macdonald} as
\[ \lambda_1 \cdot \left( \Sigma_{i=1}^n a_i \right)^2 + \lambda_2 \cdot \Sigma_{i=1}^n a_i^2 \]
for some $\lambda_1,\lambda_2 \in \Q$. The first term vanishes, and to determine $\lambda_2$ it suffices to evaluate at a single vector. Take $a=(1,-1,0,\ldots,0)$. Then $a_I=0$ unless $I$ and $[n] \setminus I$ separate $1$ and $2$. Enumerating separately the cases $1 \in I$ and $2 \in I$ we obtain:
\[ \sum_{I \in {[n] \choose m}} a_I^2 = 2 {n-2 \choose m-1}.\]
On the other hand $\Sigma_{i=1}^n a_i^2=2$. We conclude that $\lambda_2 = {n-2 \choose m-1}$ as required.
\end{proof}

Now consider the formula in \Cref{thm: higher rank DR}. Since $r=1$ we sum over $k=0$ and $k=1$. For $k=0$ we have a single partition of length $1$, and by convention $G_{0 \times 0}(A_\Ical)=1$. The contribution is:
\begin{equation} \label{eqn: rk 1 simplify 1st term} \dfrac{(-1)^n (n-1)!}{12}.\end{equation}
For $k=1$ we sum over partitions $\Ical=\{I_1,I_2\}$ of length $2$. This is equal to half the sum over subsets $I \subseteq [n]$ of size $m \in \{1,\ldots,n-1\}$. Each subset leads to a $1 \times 2$ matrix giving:
\[ G_{1 \times 1}(A_\Ical) = a_I.\]
The contribution is thus:
\begin{align} \nonumber & \dfrac{(-1)^{n}}{12} \cdot \dfrac{(-1)^1}{2} \cdot \sum_{m=1}^{n-1} \bigg( (m-1)!(n-m-1)! \sum_{I \in {[n] \choose m}} a_I^2 \bigg) \\[0.2cm]
\nonumber = \ \ & \dfrac{(-1)^{n+1}}{24} \left( \sum_{m=1}^{n-1} (m-1)!(n-m-1)! {n-2 \choose m-1} \right) \sum_{i=1}^n a_i^2 \\[0.2cm]
\label{eqn: rk 1 simplify 2nd term} = \ \ & \dfrac{(-1)^{n+1}(n-1)!}{24} \sum_{i=1}^n a_i^2. \end{align}
Combining \eqref{eqn: rk 1 simplify 1st term} and \eqref{eqn: rk 1 simplify 2nd term} we obtain the formula in \Cref{thm: Euler char rank one}.

\footnotesize
\bibliographystyle{alpha}
\bibliography{Bibliography.bib}\medskip

\noindent Luca Battistella, University of Bologna, \href{mailto:luca.battistella2@unibo.it}{luca.battistella2@unibo.it}.

\noindent Navid Nabijou, Queen Mary University of London, \href{mailto:n.nabijou@qmul.ac.uk}{n.nabijou@qmul.ac.uk}.

\end{document}